\renewcommand{\geq}{\geqslant}
\renewcommand{\leq}{\leqslant}
\renewcommand{\preceq}{\preccurlyeq}
\renewcommand{\succeq}{\succcurlyeq}
\newcommand{\SC}{Silvio Capobianco}
\newcommand{\TTU}{Tallinn University of Technology}
\newcommand{\DSS}{Department of Software Science}
\newcommand{\ADDR}{Akadeemia tee 21/1, 12618 Tallinn, Estonia}
\newcommand{\ATTTU}{\href{mailto:silvio.capobianco@taltech.ee}{\texttt{silvio.capobianco@taltech.ee}}}
\newcommand{\grazzie}{%
  This research was supported by the Estonian Ministry of Education
  and Research institutional research grant no. IUT33-13 and by the
  Estonian Research Council grant PRG1210.
}
\title{Fekete's lemma for componentwise subadditive functions of two or more real variables}
\author{%
  \SC \footnote{\DSS, \TTU. \ADDR. \ATTTU} \footnote{\grazzie}%
}
\date{}
\newcommand{\Acal}{\ensuremath{\mathcal{A}}}
\newcommand{\deux}{\ensuremath{\{0,1\}}}
\newcommand{\Fcal}{\ensuremath{\mathcal{F}}}
\newcommand{\ifthenelse}[4]{%
  \ensuremath{%
    \left\{ \begin{array}{ll}
      {#2} & \mathrm{if} \; {#1} \,, \\
      {#3} & \mathrm{otherwise} \,{#4}
    \end{array} \right.
  }
}
\newcommand{\integers}{\ensuremath{\mathbb{Z}}}
\newcommand{\lambdaxt}[2]{
  \ensuremath{{#1} \mapsto {#2}}
}
\newcommand{\negativeint}{\ensuremath{\integers_{-}}}
\newcommand{\negatives}{\ensuremath{\reals_{-}}}
\newcommand{\mainorthant}{\ensuremath{\orthant_{+}}}
\newcommand{\orthant}{\ensuremath{\mathcal{R}}}
\newcommand{\pf}{\ensuremath{\Pcal\Fcal}}
\newcommand{\Pcal}{\ensuremath{\mathcal{P}}}
\newcommand{\positiveint}{\ensuremath{\integers_{+}}}
\newcommand{\positives}{\ensuremath{\reals_{+}}}
\newcommand{\reals}{\ensuremath{\mathbb{R}}}
\newcommand{\slice}[2]{\ensuremath{\left[{#1}\!:\!{#2}\right]}}
\newcommand{\Ucal}{\ensuremath{\mathcal{U}}}
\newcommand{\Vcal}{\ensuremath{\mathcal{V}}}
\newtheorem{theorem}{Theorem}[section]
\newtheorem{corollary}[theorem]{Corollary}
\newtheorem{lemma}[theorem]{Lemma}
\newtheorem{proposition}[theorem]{Proposition}
\theoremstyle{definition}
\newtheorem{definition}[theorem]{Definition}
\newtheorem{example}[theorem]{Example}
\begin{document}

\maketitle

\begin{abstract}
  We prove an analogue of Fekete's subadditivity lemma for functions
  of several real variables which are subadditive in each variable
  taken singularly. This extends both the classical case for
  subadditive functions of one real variable, and a result in a
  previous paper by the author. While doing so, we prove that the
  functions with the property mentioned above are bounded in every
  closed and bounded subset of their domain. The arguments expand on
  those in Chapter 6 of E. Hille's 1948 textbook.
  \\ \\
  Mathematics Subject Classification 2010: 39B62, 26B35, 00A05.
  \\
  Key words: subadditivity, multivariate analysis, simultaneous limit.
\end{abstract}

\section{Introduction}

A real-valued function $f$ defined on a semigroup $(S,\cdot)$ is
\emph{subadditive} if
\begin{equation}
  \label{eq:subadd}
  f(x \cdot y) \leq f(x) + f(y)
\end{equation}
for every $x,y\in{S}$. Examples of subadditive functions include the
absolute value of a complex number; the ceiling of a real number
(smallest integer not smaller than it); the cardinality of a a finite
subset of a given set; and the length of a word over an
alphabet. Subadditive functions have applications in many fields
including information theory \cite{lm1995}, economics, and
combinatorics. In real analysis, subadditive functions are much more
interesting than additive ones: for example, every additive, Lebesgue
measurable function of one real variable is linear
(cf. \cite{sierp1920}) but the characteristic function of the set of
irrational numbers is subadditive, Lebesgue measurable, not linear,
and everywhere discontinuous.

A classical result in mathematical analysis, \emph{Fekete's lemma}
\cite{fekete1923} states that, if $f$ is a real-valued subadditive
function of one positive integer or positivereal variable, then
$f(x)/x$ converges, for $x\to+\infty$, to its greatest lower
bound. This simple fact has a huge number of applications in many
fields, including symbolic dynamics (cf. \cite[Chapter 4]{lm1995}) and
the theory of neural networks (see \cite{guerratoninelli}). Reusing a
metaphor from \cite{capobianco2008}, Fekete's lemma says that for a
sequence of independent observations, the \emph{average information
  per observation} converges to its greatest lower bound.

Given the importance and ubiquity of Fekete's lemma, we wonder if
similar results may hold for functions of many variables. Oddly, the
mathematical literature seems to contain generalizations where, in
almost all cases, the function in the limit actually depends again on
a single variable, which is sometimes a real number, sometimes a
finite set; many of these are closer to a corollary than to an
extension. Of course there are practical reasons for this: for one,
division by a vector with many components is undefined. But maybe we
could look for a different \emph{type} of limit, or even for a
different \emph{flavor} of subadditivity.

At the aim of understanding which of the above could be feasible, we
note that, if
\begin{math}
  S = S_1 \times S_2
\end{math}
is a product semigroup, we can also consider the case of a function
which is subadditive in \emph{each variable, however given the other}.
That is, instead of requiring
\begin{math}
  f(x_1 y_1, x_2 y_2) \leq f(x_1, x_2) + f(y_1, y_2)
\end{math}
for every $x_1$, $x_2$, $y_1$, and $y_2$, we could demand that:
\begin{enumerate}
\item
  \label{it:subadd-componentwise-x1}
  \begin{math}
    f(x_1 y_1, x_2) \leq f(x_1, x_2) + f(y_1, x_2)
  \end{math}
  for every $x_1$, $x_2$, and $y_1$; and
\item
  \label{it:subadd-componentwise-x2}
  \begin{math}
    f(x_1, x_2 y_2) \leq f(x_1, x_2) + f(x_1, y_2)
  \end{math}
  for every $x_1$, $x_2$, and $y_2$.
\end{enumerate}
The two requirements above, even together, do not imply subadditivity
as a function defined on the product semigroup, nor does the latter
imply the former: see Example
\ref{ex:subadd-separate-not-joint}. Oddly again, this multivariate
``componentwise subadditivity'' seems not to have been addressed very
often in the literature.

In this paper, we state and prove an extension of Fekete's lemma to
componentwise subadditive functions of $d\geq2$ real variables. We
state a special case as an example, leaving the full statement to
Section \ref{sec:fekete-d}.
\begin{proposition}
  \label{prop:fekete-2d-positives}
  Let $f$ be a function of two positive real variables which is
  subadditive in each of them, however given the other. For every
  $\delta>0$ there exists $R>0$ such that, if both $x_1>R$ and
  $x_2>R$, then:
  \begin{displaymath}
    \frac{f(x_1, x_2)}{x_1 \cdot x_2}
    <
    \inf_{x_1, x_2 > 0} \frac{f(x_1, x_2)}{x_1 \cdot x_2}
    + \delta \,.
  \end{displaymath}
  In addition,
  \begin{displaymath}
    \lim_{x_1 \to +\infty} \lim_{x_2 \to +\infty} \frac{f(x_1, x_2)}{x_1 \cdot x_2}
    =
    \lim_{x_2 \to +\infty} \lim_{x_1 \to +\infty} \frac{f(x_1, x_2)}{x_1 \cdot x_2}
    =
    \inf_{x_1, x_2 > 0} \frac{f(x_1, x_2)}{x_1 \cdot x_2} \,.
  \end{displaymath}
\end{proposition}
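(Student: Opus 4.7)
The plan is to follow the structure of Hille's one-variable proof, but applying componentwise subadditivity twice (once per variable) and invoking the boundedness lemma mentioned in the abstract. Let $L = \inf_{x_1, x_2 > 0} f(x_1, x_2)/(x_1 x_2)$. The strategy is to reduce the claim to comparing $f(x_1,x_2)/(x_1 x_2)$ against $f(a_1,a_2)/(a_1 a_2)$ for a nearly optimal base point $(a_1,a_2)$, then to extract the iterated limits from the ``both arguments large'' statement together with classical Fekete applied slicewise.

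Given $\delta > 0$, first fix $(a_1, a_2)$ with $f(a_1,a_2)/(a_1 a_2) < L + \delta/2$. For $x_1, x_2$ larger than some threshold, write $x_1 = n_1 a_1 + s_1$ and $x_2 = n_2 a_2 + s_2$, choosing the decomposition so that $s_i \in [a_i, 2 a_i]$ rather than $[0, a_i)$; this keeps the remainders inside a compact subset of the open positive half-line, which is essential for the boundedness lemma to apply. Iterating condition \ref{it:subadd-componentwise-x1} in the first slot and condition \ref{it:subadd-componentwise-x2} in the second slot then yields
\begin{displaymath}
  f(x_1, x_2) \leq n_1 n_2 f(a_1, a_2) + n_1 f(a_1, s_2) + n_2 f(s_1, a_2) + f(s_1, s_2).
\end{displaymath}

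Now divide by $x_1 x_2$. The coefficient $n_1 n_2 a_1 a_2/(x_1 x_2) = (1 - s_1/x_1)(1 - s_2/x_2)$ tends to $1$ because $s_i$ stays bounded while $x_i \to +\infty$. For the cross terms, the boundedness lemma from the earlier part of the paper furnishes finite suprema of $f(a_1, \cdot)$, $f(\cdot, a_2)$, and $f(\cdot, \cdot)$ on the compact sets $[a_2, 2 a_2]$, $[a_1, 2 a_1]$, and $[a_1, 2 a_1] \times [a_2, 2 a_2]$; combined with $n_i \leq x_i / a_i$, each remainder contributes $O(1/x_j)$ and vanishes. Hence for $R$ large enough, $x_1, x_2 > R$ forces $f(x_1,x_2)/(x_1 x_2) < f(a_1,a_2)/(a_1 a_2) + \delta/2 < L + \delta$, which is the first assertion.

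For the iterated limits, fix $x_1 > 0$ and observe that $x_2 \mapsto f(x_1, x_2)$ is a one-variable subadditive function which, by the boundedness lemma, is bounded on compact subsets of $\reals_{+}$. The classical real-variable Fekete lemma (Hille, Chapter 6) thus yields $\lim_{x_2 \to +\infty} f(x_1,x_2)/(x_1 x_2) = \inf_{x_2 > 0} f(x_1,x_2)/(x_1 x_2)$, call it $g(x_1)$. Clearly $g(x_1) \geq L$; the first part of the proposition, applied at any $x_2 > R$, gives $g(x_1) \leq L + \delta$ for $x_1 > R$, so $g(x_1) \to L$ as $x_1 \to +\infty$. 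The other iterated limit is symmetric. The main obstacle is technical rather than conceptual: one must secure the decomposition so that the residuals $(s_1, s_2)$ land in a compact subset of the open domain, otherwise the boundedness lemma is unavailable and the cross terms cannot be controlled uniformly.
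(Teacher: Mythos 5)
Your proof of the first assertion is essentially the paper's own argument (the $d=2$ case of Theorem~\ref{thm:fekete-positives-d}): the same Euclidean-type decomposition with remainders pushed into $[a_i,2a_i]$ rather than $[0,a_i)$, the same four-term expansion obtained by applying subadditivity once per slot, and the same appeal to boundedness on compact boxes (Theorem~\ref{thm:subadd-bounded-1q}) to make the three non-dominant terms vanish; you also correctly identify that keeping the remainders in a compact subset of the open quadrant is precisely what makes the boundedness result applicable. The only cosmetic difference is that you fix a near-optimal base point $(a_1,a_2)$ up front, whereas the paper proves $\limsup\leq f(t_1,t_2)/(t_1t_2)$ for \emph{every} $(t_1,t_2)$ and then infimizes --- the latter phrasing also covers the case $\inf=-\infty$ without a separate remark, which your inequality $f(a_1,a_2)/(a_1a_2)<L+\delta/2$ strictly speaking does not.

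For the iterated limits you take a genuinely different and shorter route: classical one-variable Fekete gives the inner limit $g(x_1)=\inf_{x_2>0}f(x_1,x_2)/(x_1x_2)$, and then $L\leq g(x_1)\leq L+\delta$ for $x_1>R$ is an immediate squeeze using the first assertion. The paper instead shows (Lemma~\ref{lem:positive-variables}) that the inner limit is itself a subadditive function of the remaining variable, applies Fekete again, and commutes the resulting infima (Lemma~\ref{lem:multiple-inf}). That heavier machinery is what is needed for $d\geq3$ and arbitrary permutations, where the intermediate iterated limits must be shown to exist before any squeeze is available; for $d=2$ your argument is complete and arguably cleaner. Both routes are correct.
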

That is: if the componentwise subadditive function $f(x,y)$ is
considered as a \emph{net} on the \emph{directed set} of pairs of
positive reals with the \emph{product ordering} where
$(x_1,x_2)\leq(y_1,y_2)$ if and only if $x_1\leq{y_1}$ and
$x_2\leq{y_2}$, then the \emph{simultaneous limit} on this directed
set of the net $\dfrac{f(x_1,x_2)}{x_1\cdot{x_2}}$ is its greatest
lower bound. This is a generalization of the original statement, where
the functions depend on multiple \emph{independent} real variables,
both notions of subadditivity and limit are extended, and the original
lemma is a special case for $d=1$. The double limit is also
remarkable, because multiple limits need not commute, let alone
coincide with a simultaneous limit.

A similar statement for functions defined on $d$-tuples of positive
integers (instead of reals) was proved in \cite{capobianco2008}; see
also \cite{lmp2013} for an application. The argument presented there,
however, relies on a hidden hypothesis of boundedness on compact
subsets, which comes for free in the integer setting (where compact
subsets are precisely the finite subsets) but must be proved in the
new one, and \emph{cannot} be inferred from boundedness in each
variable however given the others (see Example
\ref{ex:bound-unbound}). By adapting the proof of \cite[Theorem
  6.4.1]{hille1948} we obtain the following result: componentwise
subadditive functions defined on suitable regions of $\reals^d$ are
indeed bounded on compact subsets. For $d=2$ and positive variables
the statement goes as follows:
\begin{proposition}
  \label{prop:subadd-bounded-1q}
  In the hypotheses of Proposition \ref{prop:fekete-2d-positives}, the
  function $f$ is bounded on $[a,b]\times[c,d]$ for every $0<a<b$ and
  $0<c<d$.
\end{proposition}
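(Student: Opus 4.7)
The plan is to adapt Hille's proof of Theorem~6.4.1 to the componentwise setting. First I will establish an upper bound on $R = [a,b]\times[c,d]$, then derive the lower bound from it by a second, symmetric application of subadditivity.

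For the upper bound, I reduce boundedness over $R$ to boundedness over a small ``base'' rectangle. Any $x_1\in[a,b]$ admits a decomposition $x_1 = r_1 + \cdots + r_{n_1}$ with each $r_i\in[a,2a]$ and $n_1\leq\lfloor b/a\rfloor+1$ (take $r_1=\cdots=r_{n_1-1}=a$ and $r_{n_1}=x_1-(n_1-1)a\in[a,2a]$), and analogously $x_2=s_1+\cdots+s_{n_2}$ with $s_j\in[c,2c]$ and $n_2\leq\lfloor d/c\rfloor+1$. Iterated componentwise subadditivity, applied first in the first variable and then in the second, then yields
\[
  f(x_1,x_2) \leq n_1 n_2 \sup_{(r,s)\in[a,2a]\times[c,2c]} f(r,s),
\]
so boundedness of $f$ above on $R$ reduces to boundedness of $f$ above on the base rectangle $[a,2a]\times[c,2c]$. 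This step is the two-dimensional analog of Hille's key reduction; its execution exploits subadditivity along each one-variable slice of $f$ and combines the per-slice estimates into a uniform bound across the base rectangle, which is the most delicate part of the adaptation.

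Once an upper bound $f\leq M$ is established on the enlarged rectangle $R^\star = [a, a+b]\times[c, c+d]$ (to which the same reduction applies), the lower bound on $R$ follows easily. From
\[
  f(a+b, x_2) \leq f(x_1, x_2) + f(a+b-x_1,\, x_2),
\]
valid because $a+b-x_1\in[a,b]$, we obtain $f(x_1, x_2) \geq f(a+b, x_2) - M$; a second application of subadditivity in $x_2$ bounds $f(a+b, x_2)$ from below uniformly in $x_2\in[c,d]$ via $f(a+b, c+d)$ and the same upper bound $M$. The principal obstacle is therefore the base case of establishing the initial upper bound on $[a,2a]\times[c,2c]$; once this is in hand, the reduction to all of $R$ and the derivation of the lower bound are straightforward consequences of componentwise subadditivity.
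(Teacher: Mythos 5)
Your derivation of the lower bound from the upper bound is fine (and arguably more direct than the paper's), but the upper bound is where all the difficulty lies, and there your argument has a genuine hole. The reduction from $[a,b]\times[c,d]$ to the ``base rectangle'' $[a,2a]\times[c,2c]$ via the decompositions $x_1=r_1+\cdots+r_{n_1}$ and $x_2=s_1+\cdots+s_{n_2}$ is correct but vacuous: $[a,2a]\times[c,2c]$ is a compact rectangle of exactly the same form as the one you started with, so you have reduced the problem to an instance of itself. The actual content is the ``base case'' you defer, and the method you sketch for it --- exploit subadditivity along each one-variable slice and combine the per-slice estimates into a uniform bound --- is precisely the inference that fails. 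Hille's one-variable theorem gives, for each fixed $x_2$, a bound for $x_1\mapsto f(x_1,x_2)$ on $[a,2a]$, and symmetrically; but these bounds depend on the frozen variable, and boundedness on every horizontal and every vertical slice of a square does not imply boundedness on the square. This is exactly the trap the paper isolates in Example~\ref{ex:bound-unbound}: with $h$ the denominator function, $f(x,y)=\min(h(x),h(y))$ is bounded on every slice of $[1,2]\times[1,2]$ yet unbounded on the product.

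A telltale symptom is that your argument never uses measurability of $f$. Without it the statement is false already for $d=1$ (an additive, non-linear function built from a Hamel basis is subadditive and unbounded on every interval), so no proof that ignores measurability can succeed. The paper's route is a measure-theoretic level-set estimate (Lemma~\ref{lem:subadd-levelset-d}): splitting $t_i = x_i + (t_i - x_i)$ in each coordinate and applying subadditivity once per variable shows that for every $(x_1,x_2)\in(0,t_1)\times(0,t_2)$ at least one of the four reflected points satisfies $f\geq f(t_1,t_2)/4$, whence the set $\{f\geq f(t_1,t_2)/4\}\cap\bigl((0,t_1)\times(0,t_2)\bigr)$ has Lebesgue measure at least $t_1t_2/4$. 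If $f$ were unbounded above on the compact, one would obtain a decreasing sequence of level sets, each of measure at least $(a/2)^2$, whose nonempty intersection contains a point at which $f$ exceeds every $n$. You need this lemma, or something playing the same role, before your reduction and your lower-bound step can be put to work.
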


The paper is organized as follows. Section \ref{sec:background}
provides the theoretical background. In Section
\ref{sec:subadd-componentwise} we introduce componentwise
subadditivity and explain how it is different from subadditivity in
the product semigroup. In Section \ref{sec:boundedness} we adapt the
argument from \cite[Theorem 6.4.1]{hille1948} to prove that
componentwise subadditive functions of $d$ real variables are bounded
on compact subsets of $\reals^d$. In Section \ref{sec:fekete-d} we
state, prove, and discuss the main theorem: boundedness will have a
crucial role in the proof. Section \ref{sec:owlemma} is a discussion
on how the beautiful \emph{Ornstein-Weiss lemma}
\cite{ornstein-weiss}, an important result on subadditive functions
defined on finite subsets of groups of a certain class which includes
$\integers$ and $\reals$, \emph{is not} an extension of Fekete's
lemma.

\section{Background}
\label{sec:background}

Throughout the paper, the subsets of $\reals^d$ and the real-valued
functions of real variables are presumed to be Lebesgue measurable. We
also let real-valued functions take value either $+\infty$ or
$-\infty$, but not both.

We denote by $\reals$, $\positives$, and $\negatives$ the sets of real
numbers, positive real numbers, and negative real numbers,
respectively. Similarly, we denote by $\integers$, $\positiveint$, and
$\negativeint$ the sets of integers, positive integers, and negative
integers, respectively. All these sets are considered as additive
semigroups (groups in the case of $\reals$ and $\integers$). If $m$
and $n$ are integers and $m\leq{n}$ we denote the \emph{slice}
\begin{math}
  \{ m, m+1, \ldots, n-1, n \} = \left[ m, n \right] \cap \integers
\end{math}
as $\slice{m}{n}$. If $X$ is a set, we denote by $\pf(X)$ the set of
its finite subsets.

If the sets $X$ and $Y$ where the variable $x$ and the expression
$E(x)$ take values are irrelevant or clear from the context, we denote
by
\begin{math}
  \lambdaxt{x}{E(x)}
\end{math}
the function that associates to each value $\overline{x}$ taken by $x$
the value $E(\overline{x})$. For example,
\begin{math}
  \lambdaxt{x}{1}
\end{math}
is the constant function taking value 1 everywhere.

A \emph{directed set} is a partially ordered set
\begin{math}
  \Ucal = (U, \preceq)
\end{math}
with the following additional property: for every $x,y\in{U}$ there
exists $z\in{U}$ such that $x\preceq{z}$ and $y\preceq{z}$.  Every
totally ordered set is a directed set, and so is the family of
\emph{decompositions}
\begin{math}
  x = \left\{ a = x_0 < x_1 < \ldots < x_n = b \right\}
\end{math}
of the compact interval $[a,b]$ with the partial order $x\preceq{y}$
iff for every $i$ there exists $j$ such that $x_i=y_j$.  A function
$f$ defined on $U$ is also called a \emph{net} on $\Ucal$. If $Y$ is
the codomain of $f$, a \emph{subnet} of $f$ is a net
\begin{math}
  g : V \to Y
\end{math}
on a directed set
\begin{math}
  \Vcal = (V, \leq)
\end{math}
together with a function
\begin{math}
  \phi : V \to U
\end{math}
such that:
\begin{enumerate}
\item
  \begin{math}
    f \circ \phi = g
  \end{math};
  and
\item
  for every $x\in{U}$ there exists $y\in{V}$ such that, if $z\in{V}$
  and $z\geq{y}$, then $\phi(z)\succeq{x}$.
\end{enumerate}
For example, a subsequence
\begin{math}
  \{ x_{n_k} \}_{k \geq 1}
\end{math}
of a sequence
\begin{math}
  \{ x_{n} \}_{n \geq 1}
\end{math}
of real numbers is a subnet, with
\begin{math}
  V = U = \positiveint
\end{math}
and
\begin{math}
  \phi(k) = n_k
\end{math}.

If
\begin{math}
  \Ucal = (U, \preceq)
\end{math}
is a directed set and $f:U\to\reals$ is a function, the \emph{lower
  limit} and the \emph{upper limit} of $f$ in $\Ucal$ are the values
\begin{equation}
  \label{eq:liminf-net}
  \liminf_{x \to \Ucal} f(x)
  =
  \sup_{x \in U} \inf_{y \succeq x} f(y)
\end{equation}
and
\begin{equation}
  \label{eq:limsup-net}
  \limsup_{x \to \Ucal} f(x)
  =
  \inf_{x \in U} \sup_{y \succeq x} f(y)
  \,,
\end{equation}
respectively. The following chain of equalities holds:
\begin{equation}
  \label{eq:liminf-limsup}
  \inf_{x \in U} f(x)
  \leq
  \liminf_{x \to \Ucal} f(x)
  \leq
  \limsup_{x \to \Ucal} f(x)
  \leq
  \sup_{x \in U} f(x)
  \,.
\end{equation}
Moreover, if
\begin{math}
  \Vcal = (V, \leq)
\end{math}
is a directed set and
\begin{math}
  g : V \to \reals
\end{math}
is a subnet of $f$, then:
\begin{equation}
  \label{eq:liminf-limsup-subnet}
  \liminf_{x \to \Ucal} f(x)
  \leq
  \liminf_{y \to \Vcal} g(y)
  \leq
  \limsup_{y \to \Vcal} g(y)
  \leq
  \limsup_{x \to \Ucal} f(x)
  \,.
\end{equation}
If
\begin{math}
  \liminf_{x \to \Ucal} f(x) = \limsup_{x \to \Ucal} f(x)
\end{math},
their common value $L$ is called the \emph{limit} of $f$ in $\Ucal$,
and we say that $f$ \emph{converges} to $L$ in $\Ucal$. This is
equivalent to the following: for every $\varepsilon>0$ there exists
\begin{math}
  x_\varepsilon \in U
\end{math}
such that
\begin{math}
  |f(x) - L| < \varepsilon
\end{math}
for every
\begin{math}
  x \succeq x_\varepsilon
\end{math}.
In this case, every subnet of $f$ also converges to $L$.

The \emph{ordered product} of a family
\begin{math}
  \{ (X_i, \leq_i) \}_{i \in I}
\end{math}
of ordered sets is the ordered set $(X,\leq_\Pi)$ where
\begin{math}
  X = \prod_{i \in I} X_i
\end{math}
and the \emph{product ordering} $\leq_\Pi$ is defined as:
\begin{equation}
  \label{eq:product-order}
  x \leq_\Pi y
  \Longleftrightarrow
  x_i \leq y_i \;\; \textrm{for every } i \in I \,.
\end{equation}
If each $(X_i,\leq_i)$ is a directed set, then so is
$(X,\leq_\Pi)$. For $d\geq2$ and $w\in\deux^d$ the \emph{orthant}
denoted by $w$ is the directed set
\begin{math}
  \orthant_w = \prod_{i=1}^d (X_i, \leq_i)
\end{math}
where
\begin{math}
  (X_i, \leq_i) = (\positives, \leq)
\end{math}
if $w_i=0$ and
\begin{math}
  (X_i, \leq_i) = (\negatives, \geq)
\end{math}
if $w_i=1$. For example, $\orthant_{10}$ is the open second quadrant
of the Cartesian plane, with
\begin{math}
  (x_1, x_2) \leq (y_1, y_2)
\end{math}
if and only if $x_1\geq{y_1}$ and $x_2\leq{y_2}$. In particular, the
\emph{main orthant} of $\reals^d$, corresponding to $w=0^d$, is
\begin{math}
  \mainorthant^d = \left( \positives^d, \leq_\Pi \right)
\end{math}.
Note that, if
\begin{math}
  f : \positives^d \to \reals
\end{math}
is a net on $\mainorthant^d$ and
\begin{math}
  \{ x_{i, n} \}_{n \geq 1}
\end{math},
$i\in\slice{1}{d}$, are sequences of positive reals such that
\begin{math}
  \lim_{n \to \infty} x_{i,n} = +\infty
\end{math}
for every $i\in\slice{1}{d}$, then
\begin{math}
  g(n) = f \left( x_{1,n}, \ldots, x_{d,n} \right)
\end{math}
is a subnet of $f$: consequently, if $f$ converges to $L\in\reals$ in
$\mainorthant^d$, then $g(n)$ converges to $L$ for $n\to\infty$.

\section{Componentwise subadditivity}
\label{sec:subadd-componentwise}

In the literature, subadditivity is most often studied in functions of
a single variable, which sometimes may be vector rather than
scalar. But in some cases, it is of interest to consider functions of
$d$ independent variables, which are subadditive when considered as
functions of only one of those, but however given the remaining ones.

\begin{definition}
  \label{def:subadd-componentwise}
  Let
  \begin{math}
    S_1, \ldots, S_d
  \end{math}
  be semigroups, let
  \begin{math}
    S = \prod_{i=1}^d S_i
  \end{math},
  and let
  \begin{math}
    f : S \to \reals
  \end{math}.
  Given $i\in\slice{1}{d}$, we say that $f$ is \emph{subadditive in
    $x_i$ independently of the other variables} if, however given
  \begin{math}
    x_j \in S_j
  \end{math}
  for every
  \begin{math}
    j \in \slice{1}{d} \setminus \{ i \}
  \end{math},
  the function
  \begin{math}
    \lambdaxt{x_i}{f(x_1, \ldots, x_i, \ldots, x_d)}
  \end{math}
  is subadditive on $S_i$. We say that $f$ is \emph{componentwise
    subadditive} if it is subadditive in each variable independently
  of the others.
\end{definition}

\begin{example}
  \label{ex:subadd-componentwise-immediate}
  If
  \begin{math}
    f_1 : S_1 \to \reals
  \end{math}
  and
  \begin{math}
    f_2 : S_2 \to \reals
  \end{math}
  are both subadditive and nonnegative, then
  \begin{math}
    f : S_1 \times S_2 \to \reals
  \end{math}
  defined by
  \begin{math}
    f(x_1, x_2) = f_1(x_1) \cdot f_2(x_2)
  \end{math}
  is componentwise subadditive.

  If one between $f_1$ and $f_2$ takes negative values, then $f$ might
  not be componentwise subadditive. For example, $f(x_1)=-x_1$ is
  subadditive on $\positives$, because it is linear, and
  $f_2(x_2)=\sqrt{x_2}$ is also subadditive on $\positives$, because
  it is nondecreasing and
  \begin{math}
    x_2 + y_2 < \left( \sqrt{x_2} + \sqrt{y_2} \right)^2
  \end{math}
  for every $x_2,y_2>0$; but for any fixed $x_1>0$, the function
  \begin{math}
    \lambdaxt{x_2}{-x_1 \sqrt{x_2}}
  \end{math}
  is not subadditive on $\positives$.
\end{example}

\begin{example} (cf. \cite[Section 3]{capobianco2008})
  \label{ex:subadd-subshift-logoutput}
  Let $d$ be a positive integer and let $A$ be a finite set with
  $a\geq2$ elements, considered as a discrete space. The
  \emph{translation} by $v\in\integers^d$ is the function
  \begin{math}
    \sigma_v : A^{\integers^d} \to A^{\integers^d}
  \end{math}
  defined by
  \begin{math}
    \sigma_v(c)(x) = c(x+v)
  \end{math}
  for every $x\in\integers^d$. A $d$-dimensional \emph{subshift} on
  $A$ is a subset $X$ of $A^{\integers^d}$ which is closed in the
  product topology and invariant by translation, that is, if
  $c\in{X}$, then $\sigma_v(c)\in{X}$ for every
  $v\in\integers^d$. Given $d$ positive integers $n_1,\ldots,n_d$, an
  \emph{allowed pattern} of sides $n_1,\ldots,n_d$ for $X$ is a
  function
  \begin{math}
    p : \prod_{i=1}^d \slice{1}{n_i} \to A
  \end{math}
  such that there exists $c\in{X}$ for which the restriction of $c$ to
  $\prod_{i=1}^d\slice{1}{n_i}$ coincides with $p$. Let
  $\Acal_X(n_1,\ldots,n_d)$ be the number of allowed patterns for $X$
  of sides $n_1,\ldots,n_d$: then
  \begin{equation}
    \label{eq:subadd-ca-logoutput}
    f(n_1, \ldots, n_d)
    =
    \log_{a} \Acal_X(n_1, \ldots, n_d)
    \;\;
    \textrm{for every} \; n_1, \ldots, n_d \in \positiveint
  \end{equation}
  is componentwise subadditive, because every allowed pattern of sides
  \begin{math}
    n_1 + m_1, n_2, \ldots, n_d
  \end{math}
  can be obtained by joining an allowed pattern of sides $n_1,n_2,$
  $\ldots,n_d$ with an allowed pattern of sides $m_1,n_2,\ldots,n_d$,
  but joining two such allowed patterns does not necessarily produce
  an allowed pattern; similarly for the other $d-1$ coordinates. This
  works because $X$ is invariant by translations.
\end{example}

Componentwise subadditivity is very different from subadditivity with
respect to the operation of the product semigroup. Already with $d=2$,
if
\begin{math}
  f : S_1 \times S_2 \to \reals
\end{math}
is subadditive, then for every
\begin{math}
  x_1, y_1 \in S_1
\end{math}
and
\begin{math}
  x_2, y_2 \in S_2
\end{math}
we have:
\begin{equation}
  \label{eq:subadditive-2}
  f(x_1 y_1, x_2 y_2)
  \leq
  f(x_1, x_2) + f(y_1, y_2)
  \,,
\end{equation}
while if $f$ is componentwise subadditive, then for every
\begin{math}
  x_1, y_1 \in S_1
\end{math}
and
\begin{math}
  x_2, y_2 \in S_2
\end{math}
we have the more complex upper bound:
\begin{equation}
  \label{eq:subadditive-2-componentwise}
  f(x_1 y_1, x_2 y_2)
  \leq
  f(x_1, x_2) + f(x_1, y_2) + f(y_1, x_2) + f(y_1, y_2) \,.
\end{equation}
If $f$ is nonnegative, then (\ref{eq:subadditive-2}) implies
(\ref{eq:subadditive-2-componentwise}), which however is weaker than
the conditions of Definition \ref{def:subadd-componentwise}; if $f$ is
nonpositive, then (\ref{eq:subadditive-2-componentwise}) implies
(\ref{eq:subadditive-2}). In general, however, neither implies the
other.

\begin{example}
  \label{ex:subadd-separate-not-joint}
  By our discussion in Example
  \ref{ex:subadd-componentwise-immediate}, the function
  \begin{math}
    f(x_1, x_2) = \sqrt{x_1 x_2}
  \end{math}
  is componentwise subadditive on $\positives^2$. However, $f$ is not
  subadditive, because
  \begin{math}
    f(3, 3) = 3 > 2\sqrt{2} = f(1, 2) + f(2, 1)
  \end{math}.
\end{example}

\begin{example}
  \label{ex:subadd-ca-surjective}
  The function (\ref{eq:subadd-ca-logoutput}) of Example
  \ref{ex:subadd-subshift-logoutput} is not, in general,
  subadditive. For example, for $d=2$ and $X=A^{\integers^2}$ every
  pattern is allowed, so
  \begin{math}
    f(n_1, n_2) = n_1 n_2
  \end{math}:
  but if $n_1$, $n_2$, $m_1$, and $m_2$ are all positive, then
  \begin{math}
    (n_1 + m_1) (n_2 + m_2) > n_1 n_2 + m_1 m_2
  \end{math}.
\end{example}

Although componentwise subadditivity is very different from
subadditivity in the product semigroup, Fekete's lemma can tell us
something important for the case of positive integer or real
variables.

\begin{lemma}
  \label{lem:positive-variables}
  Let
  \begin{math}
    S = \prod_{i=1}^d S_i
  \end{math}
  with each $S_i$ being either $\positives$ or $\positiveint$, and let
  $f:S^d\to\reals\cup\{-\infty\}$ be componentwise subadditive. Having
  fixed $k\in\slice{1}{d-1}$, let
  \begin{math}
    i, j_1, \ldots, j_k \in \slice{1}{d}
  \end{math}
  be pairwise different. However fixed the values of the remaining
  variables, the function
  \begin{math}
    h : S_i \to \reals \cup \{ -\infty \}
  \end{math}
  defined by the multiple limit:
  \begin{equation}
    \label{eq:positive-variables}
    h(x_i) =
    \lim_{x_{j_1} \to +\infty} \ldots \lim_{x_{j_k} \to +\infty}
    \dfrac{f(x_1, \ldots, x_d)}{\prod_{i=1}^k x_{j_i}}
  \end{equation}
  is subadditive.
\end{lemma}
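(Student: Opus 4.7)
The plan is to peel off the $k$ iterated limits one at a time, from innermost ($x_{j_k}$) to outermost ($x_{j_1}$), invoking the classical one-variable Fekete lemma at each step and checking that partial limits preserve componentwise subadditivity in the surviving variables.

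Fix the values of all variables other than $x_i, x_{j_1}, \ldots, x_{j_k}$, and write $g_0(x_i, x_{j_1}, \ldots, x_{j_k}) = f(x_1, \ldots, x_d)$. By hypothesis $g_0$ is componentwise subadditive in its $k+1$ arguments and takes values in $\reals \cup \{-\infty\}$. Define inductively
\begin{equation*}
  g_\ell(x_i, x_{j_1}, \ldots, x_{j_{k-\ell}})
  =
  \lim_{x_{j_{k-\ell+1}} \to +\infty}
  \frac{g_{\ell-1}(x_i, x_{j_1}, \ldots, x_{j_{k-\ell+1}})}{x_{j_{k-\ell+1}}}
\end{equation*}
for $\ell = 1, \ldots, k$. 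Using at each stage the factorization $\frac{g_{\ell-1}}{x_{j_1}\cdots x_{j_{k-\ell+1}}} = \frac{1}{x_{j_1}\cdots x_{j_{k-\ell}}} \cdot \frac{g_{\ell-1}}{x_{j_{k-\ell+1}}}$ and pulling the constant factor outside of the limit, one checks that $h(x_i) = g_k(x_i)$.

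I would then prove by induction on $\ell$ the following two properties: \textbf{(a)} the limit defining $g_\ell$ exists in $\reals \cup \{-\infty\}$ and equals $\inf_{x_{j_{k-\ell+1}} > 0} g_{\ell-1}/x_{j_{k-\ell+1}}$; \textbf{(b)} $g_\ell$ is componentwise subadditive in each of its remaining arguments $x_i, x_{j_1}, \ldots, x_{j_{k-\ell}}$. Property (a) is the classical one-variable Fekete lemma applied to the subadditive section $x_{j_{k-\ell+1}} \mapsto g_{\ell-1}(\ldots)$ supplied by the inductive hypothesis (b). Property (b) follows by taking the subadditivity inequality of $g_{\ell-1}$ in any surviving variable $z$, dividing it by $x_{j_{k-\ell+1}}$, and letting $x_{j_{k-\ell+1}} \to +\infty$; each of the three involved limits exists by (a). At $\ell = k$, property (b) is precisely the subadditivity of $h = g_k$ in its single remaining argument $x_i$.

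The main obstacle I anticipate is passing to the limit cleanly when some values diverge to $-\infty$. Since $f$, and hence every $g_\ell$, never takes the value $+\infty$, both sides of the subadditivity inequality lie in $\reals \cup \{-\infty\}$, and the right-hand side is unambiguous under the convention $a + (-\infty) = -\infty$; the quotient $g_{\ell-1}/x_{j_{k-\ell+1}}$ converges monotonically to its infimum along the positive axis, so the inequality transfers to the limit with no cancellation issues. Once this is handled, the inductive step closes and the lemma follows.
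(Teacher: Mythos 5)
Your proposal is correct and is essentially the paper's own argument: the paper reduces to the case $k=1$ (apply the classical Fekete lemma to the subadditive section in $x_{j}$ to get existence of the limit as an infimum, then divide the subadditivity inequality in the surviving variable by $x_j$ and pass to the limit) and then says the general case ``follows by repeated application,'' which is exactly your induction on $\ell$. Your version is slightly more explicit in recording that componentwise subadditivity in \emph{all} surviving variables is preserved at each stage (needed to iterate), and the only small inaccuracy is the claim of \emph{monotone} convergence to the infimum, which Fekete's lemma does not give and which you do not actually need.
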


\begin{proof}
  It is sufficient to prove the thesis for $k=1$; the general case
  follows by repeated application. To simplify notation, let
  $j=j_1$. Fix the values of $x_s$ for
  \begin{math}
    s \in \slice{1}{d} \setminus \{ i, j \}
  \end{math}.
  By hypothesis, for every $x_i\in{S_i}$ the function
  \begin{math}
    \lambdaxt{x_j}{f(x_1, \ldots, x_d)}
  \end{math}
  is subadditive, so by Fekete's lemma
  \begin{math}
    h(x_i) = \lim_{x_j \to \infty} \frac{f(x_1, \ldots, x_d)}{x_j}
  \end{math}
  exists. But for every $x_i,x_i',x_j>0$ it is:
  \begin{displaymath}
    \frac{f(\ldots, x_i + x_i', \ldots )}{x_j}
    \leq
    \frac{f(\ldots, x_i, \ldots)}{x_j}
    + \frac{f(\ldots, x_1', \ldots)}{x_j}
    \,,
  \end{displaymath}
  so it must be
  \begin{math}
    h(x_1 + x_1') \leq h(x_1) + h(x_1')
  \end{math}
  too. Note that it is crucial for the proof that $x_j>0$.
\end{proof}

The following observation is crucial for the next sections.

\begin{proposition}
  \label{prop:subadditive-componentwise-orthant}
  Let
  \begin{math}
    w = w_1 \ldots w_d
  \end{math}
  be a binary word of length $d$ and let
  \begin{math}
    f : \reals_w \to \reals
  \end{math}.
  For every $i\in\slice{1}{d}$ let
  \begin{math}
    x_{w,i} = (-1)^{w_i} x_i \in \positives
  \end{math},
  and let
  \begin{math}
    f_w : \positives^d \to \reals
  \end{math}
  be defined by
  \begin{math}
    f_w(x_{w,1}, \ldots, x_{w,d}) = f(x_1, \ldots, x_d)
  \end{math}.
  The following are equivalent:
  \begin{enumerate}
  \item
    $f(x_1,\ldots,x_d)$ is componentwise subadditive in $\reals_w$.
  \item
    \begin{math}
      f_w(x_{w,1}, \ldots, x_{w,d})
    \end{math}
    is componentwise subadditive in $\positives^d$.
  \end{enumerate}
  The same holds if $\reals_w$ and $\positives^d$ are replaced with
  \begin{math}
    \integers_w = \reals_w \cap \integers^d
  \end{math}
  and $\positiveint^d$, respectively.
\end{proposition}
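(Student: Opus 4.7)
The plan is to observe that the coordinatewise sign-flip
\begin{math}
  \Phi : \reals_w \to \positives^d, \;
  \Phi(x_1, \ldots, x_d) = (x_{w,1}, \ldots, x_{w,d})
  = \left( (-1)^{w_1} x_1, \ldots, (-1)^{w_d} x_d \right)
\end{math}
is a bijection under which the relevant semigroup structure is preserved in each coordinate separately. Concretely, the $i$-th factor of $\reals_w$ is $(\positives, +)$ if $w_i = 0$ and $(\negatives, +)$ if $w_i = 1$; in either case the semigroup operation is ordinary addition, and the elementary identity $(-1)^{w_i}(x_i + y_i) = (-1)^{w_i} x_i + (-1)^{w_i} y_i = x_{w,i} + y_{w,i}$ shows that $\Phi$ restricts, in coordinate $i$, to a semigroup isomorphism onto $(\positives, +)$.

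Given this, the equivalence of the two conditions is a direct translation. Fix $i \in \slice{1}{d}$ and arbitrary values of the remaining variables. Componentwise subadditivity of $f$ in $x_i$ reads
\begin{displaymath}
  f(\ldots, x_i + y_i, \ldots) \leq f(\ldots, x_i, \ldots) + f(\ldots, y_i, \ldots)
\end{displaymath}
for all $x_i, y_i$ in the $i$-th factor of $\reals_w$. Applying $\Phi$ and using $f = f_w \circ \Phi$ together with the identity above, this inequality is literally the same statement as
\begin{displaymath}
  f_w(\ldots, x_{w,i} + y_{w,i}, \ldots)
  \leq
  f_w(\ldots, x_{w,i}, \ldots) + f_w(\ldots, y_{w,i}, \ldots)
\end{displaymath}
for all $x_{w,i}, y_{w,i} \in \positives$, which is the componentwise subadditivity of $f_w$ in its $i$-th variable. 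Running this through every $i$ yields the equivalence of the two conditions.

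The integer case is handled identically: the same map $\Phi$ restricts to a bijection between $\integers_w$ and $\positiveint^d$ and preserves coordinatewise addition, so the translation above applies verbatim.

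There is really no main obstacle here; the only thing to watch is that componentwise subadditivity ever involves only the semigroup operation inside a single fixed coordinate, so there is no cross-coordinate sign interaction that could spoil the translation. This is also the reason the analogous statement for \emph{joint} subadditivity on the product semigroup would be genuinely more delicate, but that case is not asked for.
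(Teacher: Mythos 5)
Your proof is correct. The paper states this proposition without proof, treating it as an immediate observation; your argument --- that the coordinatewise sign flip is a bijection which restricts in each coordinate to a semigroup isomorphism of $(\negatives,+)$ or $(\positives,+)$ onto $(\positives,+)$, so that componentwise subadditivity transfers verbatim, and likewise in the integer case --- is exactly the intended one.
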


\section{%
  Componentwise subadditive functions of $d$ real variables are bounded on compacts%
}
\label{sec:boundedness}

In \cite{capobianco2008} we prove the following:

\begin{proposition}[%
    {Fekete's lemma in $\positiveint^d$; \cite[Theorem 1]{capobianco2008}}%
  ]
  \label{prop:fekete-positiveint-d}
  Let
  \begin{math}
    \Ucal = \left( \positiveint^d, \leq_\Pi \right)
  \end{math}
  and let
  \begin{math}
    f : \positiveint^d \to \reals
  \end{math}
  be componentwise subadditive. Then:
  \begin{equation}
    \label{eq:fekete-positiveint-d}
    \lim_{(x_1, \ldots, x_d) \to \Ucal} \frac{f(x_1, \ldots, x_d)}{x_1 \cdots x_d}
    =
    \inf_{x_1, \ldots, x_d \in \positiveint} \frac{f(x_1, \ldots, x_d)}{x_1 \cdots x_d}
    \,.
  \end{equation}
\end{proposition}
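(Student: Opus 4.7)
The plan is to mimic the classical one-variable proof of Fekete's lemma, but adapted to several variables via an iterated application of componentwise subadditivity. Set $L = \inf_{(x_1, \ldots, x_d) \in \positiveint^d} f(x_1, \ldots, x_d)/(x_1 \cdots x_d)$. Since (\ref{eq:liminf-limsup}) gives $\liminf \geq L$ for free, the real task is to show
\[
\limsup_{(x_1, \ldots, x_d) \to \Ucal} \frac{f(x_1, \ldots, x_d)}{x_1 \cdots x_d} \leq L.
\]

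For any $\varepsilon > 0$ (or any target value, if $L = -\infty$), I would pick a base point $(n_1, \ldots, n_d) \in \positiveint^d$ with $f(n_1, \ldots, n_d)/(n_1 \cdots n_d)$ within $\varepsilon$ of $L$. Then, for arbitrary $(x_1, \ldots, x_d)$, perform Euclidean division $x_i = q_i n_i + r_i$ with $0 \leq r_i < n_i$, and iteratively apply the one-variable subadditivity inequality in each coordinate separately, splitting the $i$-th argument as $q_i$ copies of $n_i$ plus a remainder of $r_i$ when $r_i \geq 1$. Unfolding all $d$ coordinates gives
\[
f(x_1, \ldots, x_d) \leq \sum_{S \subseteq \slice{1}{d}} \left( \prod_{i \notin S} q_i \right) \left( \prod_{i \in S} \iverson{r_i > 0} \right) f(z_{1, S}, \ldots, z_{d, S}),
\]
where $z_{i, S} = n_i$ for $i \notin S$ and $z_{i, S} = r_i$ for $i \in S$.

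Next I would divide both sides by $x_1 \cdots x_d = \prod_i (q_i n_i + r_i)$ and take the simultaneous limit $(x_1, \ldots, x_d) \to \Ucal$ with the base $(n_1, \ldots, n_d)$ fixed. The leading term ($S = \emptyset$) contributes $\prod_i (q_i/x_i) \cdot f(n_1, \ldots, n_d)$, which tends to $f(n_1, \ldots, n_d)/(n_1 \cdots n_d) < L + \varepsilon$ since $q_i/x_i \to 1/n_i$. For each $S \neq \emptyset$, the quotient $\prod_{i \notin S} q_i / \prod_i x_i$ has $|S| \geq 1$ factors $1/x_i$ tending to zero while the remaining $q_i/x_i$ stay bounded, and the $f$ values involved range over the \emph{finite} set $\prod_{i=1}^d \slice{1}{n_i}$, hence are uniformly bounded by a constant depending only on the base. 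So every $S \neq \emptyset$ term vanishes in the limit, yielding $\limsup \leq L + \varepsilon$; letting $\varepsilon \to 0$ closes the gap.

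The main obstacle is really bookkeeping rather than a deep idea: writing the unfolded inequality cleanly over all $2^d$ subsets, and ensuring that coordinates with $r_i = 0$ contribute no spurious term involving $f$ at value $0 \notin \positiveint$ — the Iverson bracket $\iverson{r_i > 0}$ takes care of this automatically. The essential feature that makes the proof work in $\positiveint^d$ but not directly in $\positives^d$ is that the remainder arguments $(z_{1, S}, \ldots, z_{d, S})$ lie in a finite set, so uniform boundedness of the boundary $f$-values is automatic; in the real setting this must be established separately, which is precisely the role of Proposition \ref{prop:subadd-bounded-1q} before the same scheme is run in Section \ref{sec:fekete-d}.
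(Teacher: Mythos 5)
Your argument is correct and is essentially the paper's own scheme (as run for the real-variable case in the proof of Theorem \ref{thm:fekete-positives-d} and attributed to \cite[Theorem 1]{capobianco2008}): fix a base point near the infimum, divide each coordinate with remainder, unfold componentwise subadditivity into a sum over the $2^d$ subsets, let the leading term converge to $f(n_1,\ldots,n_d)/(n_1\cdots n_d)$ and the rest vanish by boundedness of $f$ on the finite set of remainder points. The only cosmetic difference is your remainder convention $0\leq r_i<n_i$ with an Iverson bracket, where the paper instead forces $r_i\in[t_i,2t_i)$ to keep remainders away from $0$; both correctly isolate the finiteness of the remainder set as the reason the integer case needs no separate boundedness theorem.
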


\begin{example}
  \label{ex:fekete-entropy}
  With the notation of Example \ref{ex:subadd-subshift-logoutput} and
  $\Ucal$ as in Proposition \ref{prop:fekete-positiveint-d}, the
  value:
  \begin{equation}
    \label{eq:fekete-entropy}
    h(X)
    =
    \lim_{(x_1, \ldots, x_d) \to \Ucal}
    \frac{\log_a \Acal_X(x_1, \ldots, x_d)}{x_1 \cdots x_d}
  \end{equation}
  is well defined, and is called the \emph{entropy} of the subshift
  $X$. For $d=1$ this coincides with \cite[Definition 4.1.1]{lm1995}.
\end{example}

We try to reuse the argument from \cite{capobianco2008} to prove
Proposition \ref{prop:fekete-2d-positives}. Fix $s,t>0$. Every $x>0$
large enough has a unique writing
\begin{math}
  x = qs + r
\end{math}
with $q$ positive integer and
\begin{math}
  r \in \left[ s, 2s \right)
\end{math},
and every $y>0$ large enough has a unique writing
\begin{math}
  y = mt + p
\end{math}
with $m$ positive integer and
\begin{math}
  p \in \left[ t, 2t \right)
\end{math}. By subadditivity,
\begin{eqnarray*}
  \frac{f(x, y)}{x \cdot y}
  & \leq &
  \frac{q}{x \cdot y} f(s, y)
  + \frac{1}{x \cdot y} f(r, y)
  \\ & \leq &
  \frac{q}{x} \cdot \frac{m}{y} \cdot f(s, t)
  \\ & &
  + \frac{q}{x} \cdot \frac{1}{y} \cdot f(s, p)
  + \frac{1}{x} \cdot \frac{m}{y} \cdot f(r, t)
  \\ & &
  + \frac{1}{x} \cdot \frac{1}{y} \cdot f(r, p)
\end{eqnarray*}
Consider the four summands on the right-hand side of the last
inequality. By construction,
\begin{math}
  \lim_{x \to +\infty} q / x = 1 / s
\end{math}
and
\begin{math}
  \lim_{y \to +\infty} m / y = 1 / t
\end{math}:
therefore, the first summand converges to
\begin{math}
  f(s, t) / st
\end{math}
for
\begin{math}
  (x, y) \to \mainorthant^2
\end{math}.

Now, by \cite[Theorem 6.4.1]{hille1948}, a subadditive function of one
positive real variable is bounded in every compact subset of
$\positives$. Then
\begin{math}
  \lambdaxt{p}{f(s, p)}
\end{math}
is bounded on
\begin{math}
  \left[ t, 2t \right]
\end{math}
and
\begin{math}
  \lambdaxt{r}{f(r, t)}
\end{math}
is bounded on
\begin{math}
  \left[ s, 2s \right]
\end{math}:
consequently, the second and third summand vanish for
\begin{math}
  (x, y) \to \mainorthant^2
\end{math}.

But the fourth summand presents a problem. What we know, is that
\begin{math}
  \lambdaxt{x}{f(x, y)}
\end{math}
is bounded in $[s,2s]$ for every $y\in[t,2t]$, and
\begin{math}
  \lambdaxt{y}{f(s, y)}
\end{math}
is bounded in $[t,2t]$ for every $x\in[s,2s]$. This is, in general,
\emph{strictly less} than $f$ being bounded in
\begin{math}
  [s, 2s] \times [t, 2t]
\end{math}:
which is what we actually need to show that the fourth summand
vanishes when $x$ and $y$ both grow arbitrarily large!

\begin{example}[{suggested by Arthur Rubin}]
  \label{ex:bound-unbound}
  Let
  \begin{math}
    h : \positives \to \reals
  \end{math}
  be such that $h(t)$ is the denominator of the representation of $t$
  as an irreducible fraction if $t$ is rational, and 0 if $t$ is
  irrational. Then
  \begin{math}
    f : \positives^2 \to \reals
  \end{math}
  defined by
  \begin{math}
    f(x, y) = \min(h(x), h(y))
  \end{math}
  satisfies the following conditions:
  \begin{enumerate}
  \item
    for every $x\in[1,2]$, the function $\lambdaxt{y}{f(x,y)}$ is
    bounded in $[1,2]$;
  \item
    for every $y\in[1,2]$, the function $\lambdaxt{x}{f(x,y)}$ is
    bounded in $[1,2]$.
  \end{enumerate}
  However, $f$ is not bounded in
  \begin{math}
    [1, 2] \times [1, 2]
  \end{math},
  because
  \begin{math}
    f(1 + 1/n, 1 + 1/n)
    = n
  \end{math}
  for every $n\in\positiveint$. On the other hand, $h(4)=1$ and
  \begin{math}
    h(\pi) = h(4-\pi) = 0
  \end{math},
  so $f$ is neither subadditive nor componentwise subadditive in
  $\positives^2$.
\end{example}

We could overcome this issue if a result of boundedness such as the
one in \cite[Theorem 6.4.1]{hille1948} held for componentwise
subadditive functions. Luckily, it is so, and we can follow the same
idea of Hille's proof. Given
\begin{math}
  f : \positives^d \to \reals
\end{math}
and
\begin{math}
  t_1, \ldots, t_d \in \positives
\end{math},
let:
\begin{equation}
  \label{eq:levelset-d}
  V_{t_1, \ldots, t_d, k}
  = \{
  (x_1, \ldots, x_d) \in \positives^d \mid
  0 < x_i < t_i \forall i \in \slice{1}{d},
  f(x_1, \ldots, x_d) \geq k
  \} \,.
\end{equation}
Under our hypothesis that $f$ is measurable, so is
(\ref{eq:levelset-d}).

The next statement is the cornerstone of our argument. For Lemma
\ref{lem:subadd-levelset-d} and Theorem \ref{thm:subadd-bounded-1q},
the symbol $\mu$ and the word ``measure'' denote the $d$-dimensional
Lebesgue measure.

\begin{lemma}
  \label{lem:subadd-levelset-d}
  Let
  \begin{math}
    f : \positives^d \to \reals
  \end{math}
  be componentwise subadditive. Then for every
  \begin{math}
    t_1, \ldots, t_d \in \positives
  \end{math},
  \begin{equation}
    \label{eq:subadd-levelset-d}
    \mu \left( V_{t_1, \ldots, t_d,  \frac{f(t_1, \ldots, t_d)}{2^d}} \right)
    \geq
    \frac{t_1 \cdots t_d}{2^d} \,.
  \end{equation}
\end{lemma}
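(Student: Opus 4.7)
The plan is to mimic Hille's one-variable argument, which for a subadditive $f:\positives\to\reals$ observes that $f(t)\leq f(x)+f(t-x)$ and deduces that the set $\{x\in(0,t) : f(x)\geq f(t)/2\}$ together with its reflection through $t/2$ covers $(0,t)$, forcing its measure to be at least $t/2$. In our setting I would iterate this splitting along each coordinate in turn. Fix $(t_1,\ldots,t_d)\in\positives^d$ and any $(x_1,\ldots,x_d)$ with $0<x_i<t_i$. Applying subadditivity in the first variable yields
\begin{displaymath}
f(t_1,\ldots,t_d)\leq f(x_1,t_2,\ldots,t_d)+f(t_1-x_1,t_2,\ldots,t_d),
\end{displaymath}
and applying subadditivity in $x_2$ to each summand, then in $x_3$, and so on to depth $d$, produces
\begin{displaymath}
f(t_1,\ldots,t_d)\leq\sum_{\epsilon\in\deux^d}f\bigl(y_1^{\epsilon_1},\ldots,y_d^{\epsilon_d}\bigr),
\end{displaymath}
where $y_i^0=x_i$ and $y_i^1=t_i-x_i$, by a routine induction on the number of coordinates already processed.

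Since this is a sum of $2^d$ terms bounded below by $f(t_1,\ldots,t_d)$, at least one index $\epsilon\in\deux^d$ must satisfy $f(y_1^{\epsilon_1},\ldots,y_d^{\epsilon_d})\geq f(t_1,\ldots,t_d)/2^d$. For each $\epsilon$ let $T_\epsilon:\prod_{i=1}^d(0,t_i)\to\prod_{i=1}^d(0,t_i)$ send $(x_1,\ldots,x_d)$ to $(y_1^{\epsilon_1},\ldots,y_d^{\epsilon_d})$; it is a measure-preserving involution, being a product of identities and reflections through the points $t_i/2$. Writing $V=V_{t_1,\ldots,t_d,\,f(t_1,\ldots,t_d)/2^d}$, the pointwise observation above says that
\begin{displaymath}
\prod_{i=1}^d(0,t_i)=\bigcup_{\epsilon\in\deux^d}T_\epsilon^{-1}(V),
\end{displaymath}
so subadditivity of Lebesgue measure together with $\mu(T_\epsilon^{-1}(V))=\mu(V)$ gives $t_1\cdots t_d\leq 2^d\mu(V)$, which is the desired bound.

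I do not foresee a substantial obstacle: measurability of $V$ is guaranteed by the paper's standing assumption, the iterated inequality is a direct induction, and the fact that $x_i\in(0,t_i)$ implies both $x_i$ and $t_i-x_i$ lie in $(0,t_i)$ ensures that each $T_\epsilon$ really maps the open box into itself so that the covering claim is meaningful. The one conceptual point worth emphasizing is the pigeonhole step on the $2^d$ summands: this is precisely what lets a single invocation of componentwise subadditivity in each coordinate replace Hille's one-dimensional reflection trick simultaneously along all $d$ axes.
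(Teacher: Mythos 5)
Your proof is correct and follows essentially the same route as the paper's: the iterated componentwise-subadditivity expansion into $2^d$ summands, the pigeonhole step, and the $2^d$ measure-preserving involutions of the open box (your $T_\epsilon^{-1}(V)$ coincides with the paper's $V_w$ since each $T_\epsilon$ is an involution). No gaps.
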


\begin{proof}
  Call $V$ the set on the left-hand side of
  (\ref{eq:subadd-levelset-d}). For every $i\in\slice{1}{d}$, given
  \begin{math}
    x_i \in (0, t_i)
  \end{math},
  let
  \begin{math}
    y_i^{(0)} = x_i
  \end{math}
  and
  \begin{math}
    y_i^{(1)} = t_i - x_i
  \end{math}.
  Then for every $w\in\deux^d$ the transformation
  \begin{displaymath}
    \lambdaxt{%
      (x_1, \ldots, x_d)
    }{
      \left( y_1^{(w_1)}, \ldots, y_d^{(w_d)} \right)
    }
  \end{displaymath}
  is a measure-preserving continuous involution, hence the set:
  \begin{displaymath}
    V_w = \left\{ \left( y_1^{(w_1)}, \ldots, y_d^{(w_d)} \right) \mid
    (x_1, \ldots, x_d) \in V \right\}
  \end{displaymath}
  is measurable and satisfies
  \begin{math}
    \mu(V_w) = \mu(V)
  \end{math}.
  Note that $V=V_{0^d}$.

  By repeatedly applying subadditivity, once in each variable, we
  arrive at:
  \begin{equation}
    \label{eq:levelset-subadd}
    f(t_1, \ldots, t_d)
    \leq
    \sum_{w \in \deux^d} f \left( y_1^{(w_1)}, \ldots, y_d^{(w_d)} \right)
    \,.
  \end{equation}
  For example, for $d=2$ we have:
  \begin{eqnarray*}
    f(t_1, t_2)
    & \leq &
    f(x_1, t_2) + f(t_1 - x_1, t_2)
    \\ & \leq &
    f(x_1, x_2) + f(x_1, t_2 - x_2)
    \\ & &
    + f(t_1 - x_1, x_2) + f(t_1 - x_1, t_2 - x_2)
    \,.
  \end{eqnarray*}
  For (\ref{eq:levelset-subadd}) to hold, at least one of the $2^d$
  summands on the right-hand side must be no smaller than
  \begin{math}
    \dfrac{f(t_1, \ldots, t_d)}{2^d}
  \end{math}.
  Then
  \begin{math}
    \bigcup_{w \in \deux^d} V_w = \prod_{i=1}^d (0, t_i)
  \end{math},
  so:
  \begin{displaymath}
    t_1 \cdots t_d
    \leq
    \sum_{w \in \deux^d} \mu(V_w)
    =
    2^d \, \mu(V) \,.
  \end{displaymath}
\end{proof}

From Lemma \ref{lem:subadd-levelset-d} follows:

\begin{theorem}
  \label{thm:subadd-bounded-1q}
  Let
  \begin{math}
    f : \positives^d \to \reals
  \end{math}
  be componentwise subadditive. Then $f$ is bounded in every compact
  subset of $\positives^d$.
\end{theorem}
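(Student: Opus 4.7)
The plan is to prove boundedness in two steps: first establish that $f$ is bounded above on every compact subset of $\positives^d$ by combining Lemma~\ref{lem:subadd-levelset-d} with Lebesgue measurability of $f$, then deduce the lower bound from componentwise subadditivity together with the upper bound just obtained. The lemma does essentially all the real work in step one; step two is then almost formal.

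For the upper bound I would argue by contradiction. Suppose $f$ is not bounded above on some compact $K\subset\positives^d$, take a sequence $(x^{(n)})_{n\geq 1}\subset K$ with $f(x^{(n)})\to+\infty$, and, using compactness, assume $x^{(n)}\to x^\ast\in K$. Because $K$ is compact inside the \emph{open} orthant $\positives^d$, there exist $0<\delta<T$ with $\delta\leq x_i^{(n)}\leq T$ for every $i$ and $n$. Let $B^\ast=(0,T+1)^d$, a fixed bounded box containing every $B_n=(0,x_1^{(n)})\times\cdots\times(0,x_d^{(n)})$. Lemma~\ref{lem:subadd-levelset-d} gives
\[
\mu\!\left(V_{x_1^{(n)},\ldots,x_d^{(n)},\,f(x^{(n)})/2^d}\right)\;\geq\;\frac{x_1^{(n)}\cdots x_d^{(n)}}{2^d}\;\geq\;\left(\frac{\delta}{2}\right)^{\!d}.
\]
Since $f$ is real-valued and measurable on the bounded set $B^\ast$, the sets $\{y\in B^\ast:f(y)\geq M\}$ decrease to the empty set as $M\to+\infty$, so their measure tends to $0$; I would fix $M$ so that this measure is strictly less than $(\delta/2)^d$. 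For $n$ large enough that $f(x^{(n)})/2^d\geq M$, the level set above is contained in $\{y\in B^\ast:f(y)\geq M\}$, contradicting the two measure estimates.

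For the lower bound, take $K\subseteq\prod_{i=1}^d[a_i,b_i]$ with $0<a_i<b_i$ and set $t_i=2b_i$. Given $x\in K$, put $y_i^{(0)}=x_i$ and $y_i^{(1)}=t_i-x_i\in[b_i,2b_i-a_i]$; each vector $\bigl(y_1^{(w_1)},\ldots,y_d^{(w_d)}\bigr)$ then lies in the compact set $K^\ast=\prod_{i=1}^d[a_i,2b_i]\subset\positives^d$ for every $w\in\deux^d$. Applying inequality~(\ref{eq:levelset-subadd}) from the proof of Lemma~\ref{lem:subadd-levelset-d} and separating the summand $w=0^d$ yields
\[
f(t_1,\ldots,t_d)\;\leq\; f(x)+\!\!\sum_{w\neq 0^d}\!f\!\left(y_1^{(w_1)},\ldots,y_d^{(w_d)}\right).
\]
By the upper bound already established, $f\leq M^\ast$ on $K^\ast$ for some finite $M^\ast$, so $f(x)\geq f(t_1,\ldots,t_d)-(2^d-1)M^\ast$, a uniform lower bound on $K$.

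The main obstacle is the upper-bound step: Lemma~\ref{lem:subadd-levelset-d} produces large level sets inside boxes $B_n$ that move with $n$, whereas measurability is only useful inside a single enclosing set of finite measure. Compactness of $K$ inside the \emph{open} orthant is precisely what reconciles the two, by bounding the coordinates $x_i^{(n)}$ away from $0$ so that $\mu(B_n)$ stays above a fixed positive constant and the lemma's estimate remains uniformly nontrivial. Without this separation from the boundary (as Example~\ref{ex:bound-unbound} warns) the argument would collapse.
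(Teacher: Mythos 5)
Your proof is correct. The upper-bound half is essentially the paper's argument: both invoke Lemma~\ref{lem:subadd-levelset-d} to produce superlevel sets of measure at least a fixed positive constant inside a single bounded box, and both derive the contradiction from continuity from above of Lebesgue measure --- the paper phrases it as the decreasing intersection $\bigcap_n V_{b,\ldots,b,n}$ having measure at least $(a/2)^d$ and hence containing a point where $f$ would be $+\infty$, while you phrase it as $\mu\{y\in B^\ast: f(y)\geq M\}\to 0$ so that a sufficiently large $M$ already rules out a level set of measure $(\delta/2)^d$; these are the same mechanism. Where you genuinely diverge is the lower bound. The paper argues by contradiction: it extracts a convergent subsequence of points $x^{(n)}$ with $f(x^{(n)})\leq -n$, uses componentwise subadditivity to show $f(y+x^{(n)})\leq(2^d-1)M-n$ for $y\in[1,4]^d$, and then covers a fixed small cube by such translates to contradict the already-proved upper bound. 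You instead reuse the dyadic splitting inequality~(\ref{eq:levelset-subadd}) directly with $t_i=2b_i$, isolate the summand $w=0^d$, and bound the remaining $2^d-1$ summands by the supremum of $f$ on the fixed compact $\prod_{i=1}^d[a_i,2b_i]$, obtaining the explicit uniform bound $f(x)\geq f(2b_1,\ldots,2b_d)-(2^d-1)M^\ast$. Your version is shorter and constructive, avoids the subsequence extraction and the covering argument entirely, and makes it transparent that the lower bound is a formal consequence of the upper bound plus the same splitting identity that drives the lemma; the paper's version stays closer to Hille's original one-variable argument but buys nothing extra here. One small point worth making explicit when you write this up: the continuity-from-above step in your upper-bound argument needs $\{y\in B^\ast: f(y)\geq M_0\}$ to have finite measure for some $M_0$ and the intersection over all $M$ to be empty; both hold because $B^\ast$ is bounded and $f$ is real-valued, which is exactly the hypothesis the paper's version of the contradiction also relies on.
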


\begin{proof}
  It is sufficient to prove the thesis for every compact hypercube of
  the form $H=[a,b]^d$ with $0<a<b$. We proceed by contradiction,
  following the argument from \cite[Theorem 6.4.1]{hille1948}.

  First, suppose that $f$ is unbounded from above in $H$. Then for
  every $n\geq1$ and $i\in\slice{1}{d}$ there exists
  \begin{math}
    x_{i, n} \in [a, b]
  \end{math}
  such that
  \begin{math}
    f(x_{1, n}, \ldots, x_{d, n}) \geq 2^d n
  \end{math}.
  Let
  \begin{math}
    W_{t_1, \ldots, t_d}
  \end{math}
  be the set in (\ref{eq:subadd-levelset-d}). By construction, for
  every $n\geq1$ we have
  \begin{displaymath}
    W_{x_{1,n}, \ldots, x_{d,n}}
    \subseteq
    V_{b, \ldots, b, n}
    \,,
  \end{displaymath}
  and by Lemma \ref{lem:subadd-levelset-d},
  \begin{displaymath}
    \mu \left( W_{x_{1,n}, \ldots, x_{d,n}} \right)
    \geq
    \frac{x_{1,n} \cdots x_{d,n}}{2^d}
    \geq
    \left( \frac{a}{2} \right)^d \,.
  \end{displaymath}
  Now, the sets
  \begin{math}
    V_{b, \ldots, b, n}
  \end{math}
  are measurable and form a nonincreasing sequence, so
  \begin{math}
    V = \bigcap_{n \geq 0} V_{b, \ldots, b, n}
  \end{math}
  is measurable and
  \begin{math}
    \mu(V) \geq\left( a/2 \right)^d
  \end{math}:
  in particular, $V$ cannot be empty. But for
  \begin{math}
    (x_1, \ldots, x_d) \in V
  \end{math}
  it must be
  \begin{math}
    f(x_1, \ldots, x_d) \geq n
  \end{math}
  for every $n\geq1$: which is impossible.

  Next, suppose that $f$ is unbounded from below in $H$. Then for
  every $n\geq1$ and $i\in\slice{1}{d}$ there exists
  \begin{math}
    x_{i, n} \in [a, b]
  \end{math}
  such that
  \begin{math}
    f(x_{1, n}, \ldots, x_{d, n}) \leq -n
  \end{math}:
  we may assume that
  \begin{math}
    \lim_{n \to \infty} x_{i, n} = x_i \in [a, b]
  \end{math}
  exists for every $i\in\slice{1}{d}$. Let
  \begin{math}
    s = \min(a, 1)
  \end{math},
  \begin{math}
    t = b + 4
  \end{math},
  and
  \begin{math}
    J = [s,t]^d
  \end{math}:
  then every point
  \begin{math}
    (z_1, \ldots, z_d)
  \end{math}
  where each $z_i$ belongs to either $[a,b]$ or $[1,4]$ belongs to
  $J$. Let now $y_i\in[1,4]$ for every $i\in\slice{1}{d}$ and
  \begin{displaymath}
    M = \sup \{ f(z_1, \ldots, z_d) \mid (z_1, \ldots, z_d) \in J \} \,,
  \end{displaymath}
  which is a real number because of the previous point. By applying
  subadditivity in each variable, for such
  \begin{math}
    y_1, \ldots, y_d
  \end{math}
  and $n$ we obtain
  \begin{displaymath}
    \label{eq:subadd-bounded-1q}
    f(y_1 + x_{1, n}, \ldots, y_d + x_{d, n})
    \leq
    (2^d-1)M - n
    \,,
  \end{displaymath}
  because $-n$ is an upper bound for
  \begin{math}
    f(x_{1, n}, \ldots, x_{d, n})
  \end{math}
  and $M$ is an upper bound for the other $2^d-1$ summands, For
  example, for $d=2$ we have:
  \begin{eqnarray*}
    f(y_1 + x_{1, n}, y_2 + x_{2, n})
    & \leq &
    f(y_1, y_2) + f(y_1, x_{2, n})
    \\ & &
    + f(x_{1, n}, y_2) + f(x_{1, n}, x_{2, n})
    \\ & \leq & 3M - n \,.
  \end{eqnarray*}
  But for every $n$ such that
  \begin{math}
    |x_{i,n} - x_i| \leq 1
  \end{math}
  it is
  \begin{math}
    [x_i + 2, x_i + 3] \subseteq [x_{i,n} + 1, x_{i,n} + 4]
  \end{math}:
  calling
  \begin{displaymath}
    K = \prod_{i=1}^d [x_i + 2, x_i + 3] \subseteq J \,,
  \end{displaymath}
  for every $n$ large enough every element of $K$ can be written in
  the form
  \begin{math}
    (y_1 + x_{1,n}, \ldots, y_d + x_{d,n})
  \end{math}
  for suitable
  \begin{math}
    y_1, \ldots, y_d \in [1, 4]
  \end{math}.
  For every
  \begin{math}
    (z_1, \ldots, z_d) \in K
  \end{math}
  it must then be
  \begin{math}
    f(z_1, \ldots, z_d) \leq (2^d-1)M - n
  \end{math}
  for every $n$ large enough: which is impossible.
\end{proof}

If $f:\reals_w\to\reals$ is bounded on the compact subsets of
$\reals_w$, then $f_w$ as defined in Proposition
\ref{prop:subadditive-componentwise-orthant} is bounded on the compact
subsets of $\positives$; and vice versa.  From Theorem
\ref{thm:subadd-bounded-1q} and Proposition
\ref{prop:subadditive-componentwise-orthant} follows:

\begin{corollary}
  \label{cor:subadd-bounded-Rw}
  Let $w\in\deux^d$ and let
  \begin{math}
    f : \reals_w \to \reals
  \end{math}
  be componentwise subadditive. Then $f$ is bounded in every compact
  subset of $\reals_w$.
\end{corollary}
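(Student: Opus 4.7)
The plan is to deduce the corollary directly from Theorem \ref{thm:subadd-bounded-1q} by transporting the problem to the main orthant via the change of variables supplied by Proposition \ref{prop:subadditive-componentwise-orthant}. Given $w\in\deux^d$ and a componentwise subadditive $f:\reals_w\to\reals$, I would define $f_w:\positives^d\to\reals$ as in that proposition, namely by the rule $f_w(x_{w,1},\ldots,x_{w,d})=f(x_1,\ldots,x_d)$, where $x_{w,i}=(-1)^{w_i}x_i$. The proposition immediately yields that $f_w$ is componentwise subadditive on $\positives^d$.

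Next I would apply Theorem \ref{thm:subadd-bounded-1q} to $f_w$: it is bounded on every compact subset of $\positives^d$. To transfer this boundedness back to $f$, I would observe that the coordinatewise sign flip $\varphi_w:\reals_w\to\positives^d$ sending $(x_1,\ldots,x_d)$ to $((-1)^{w_1}x_1,\ldots,(-1)^{w_d}x_d)$ is a homeomorphism (in fact a linear isometry), so $\varphi_w(K)$ is compact in $\positives^d$ whenever $K$ is compact in $\reals_w$. Since $f(x)=f_w(\varphi_w(x))$ by construction, the supremum and infimum of $f$ over $K$ coincide with those of $f_w$ over $\varphi_w(K)$, both of which are finite.

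There is really no obstacle here: the only thing that could conceivably be delicate is checking that the transported compact set lies inside $\positives^d$ rather than just in the closure, but this is automatic because $K\subseteq\reals_w$ and each coordinate of $\reals_w$ is one of $\positives$ or $\negatives$, so $\varphi_w$ carries $K$ into $\positives^d$. Thus the corollary reduces to a one-line composition of Proposition \ref{prop:subadditive-componentwise-orthant} and Theorem \ref{thm:subadd-bounded-1q}, and the proof is complete.
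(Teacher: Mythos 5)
Your argument is correct and is exactly the paper's own: the corollary is deduced by transporting $f$ to the main orthant via the sign-flip change of variables of Proposition \ref{prop:subadditive-componentwise-orthant}, applying Theorem \ref{thm:subadd-bounded-1q} to $f_w$, and pulling boundedness back along the (measure- and compactness-preserving) homeomorphism. Your added remark that compact subsets of $\reals_w$ land inside $\positives^d$ rather than merely its closure is a harmless but welcome explicit check of a point the paper leaves tacit.
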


In turn, Corollary \ref{cor:subadd-bounded-Rw} allows us to prove:

\begin{theorem}
  \label{thm:subadd-bounded}
  Let
  \begin{math}
    f : \reals^d \to \reals
  \end{math}
  be componentwise subadditive. Then $f$ is bounded in every compact
  subset of $\reals^d$.
\end{theorem}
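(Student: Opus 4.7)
The plan is to bootstrap Corollary \ref{cor:subadd-bounded-Rw}, which handles the open orthants, up to all of $\reals^d$ in two steps: first an upper bound on every closed hypercube, then a matching lower bound derived from it. Since every compact subset of $\reals^d$ is contained in some box $K=[-N,N]^d$, it is enough to bound $f$ on such a $K$.

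For the upper bound, I would fix any real $a>N$ and, for each $x\in K$, apply componentwise subadditivity in the first variable to the decomposition $x_1=(x_1+a)+(-a)$. Iterating the same trick in coordinates $2,\ldots,d$ and expanding all resulting terms produces
\begin{displaymath}
  f(x_1,\ldots,x_d)
  \leq \sum_{w\in\deux^d} f\left(z^{(w)}_1,\ldots,z^{(w)}_d\right),
\end{displaymath}
where $z^{(w)}_i=x_i+a$ if $w_i=0$ and $z^{(w)}_i=-a$ if $w_i=1$. Because $a>N$, each $z^{(w)}_i$ lies either in $[a-N,a+N]\subseteq\positives$ or equals $-a\in\negatives$, so $z^{(w)}\in\reals_w$; as $x$ ranges over $K$ the point $z^{(w)}$ stays inside the compact box $\prod_{i:w_i=0}[a-N,a+N]\times\prod_{i:w_i=1}\{-a\}\subseteq\reals_w$. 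The restriction of $f$ to $\reals_w$ is componentwise subadditive, since subadditivity in each variable is inherited by the subsemigroups $\positives$ and $\negatives$ of $\reals$, so Corollary \ref{cor:subadd-bounded-Rw} supplies a finite upper bound $C_w$ for $f$ on that box. Summing the $2^d$ contributions gives $f\leq M:=\sum_{w}C_w$ throughout $K$.

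For the lower bound, I would exploit the symmetry $x_1\leftrightarrow -x_1$. Componentwise subadditivity in the first variable yields
\begin{displaymath}
  f(0,x_2,\ldots,x_d)
  = f\left(x_1+(-x_1),x_2,\ldots,x_d\right)
  \leq f(x_1,x_2,\ldots,x_d)+f(-x_1,x_2,\ldots,x_d),
\end{displaymath}
while the same property applied to $0=0+0$ forces $f(0,x_2,\ldots,x_d)\geq 0$. Since $(-x_1,x_2,\ldots,x_d)\in K$ whenever $x\in K$, the previously established upper bound gives $f(-x_1,x_2,\ldots,x_d)\leq M$, whence $f(x_1,x_2,\ldots,x_d)\geq -M$ and $|f|\leq M$ on $K$.

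The main obstacle is the upper bound: componentwise subadditivity is \emph{not} preserved under translation, so one cannot simply shift $K$ into $\positives^d$ and invoke Theorem \ref{thm:subadd-bounded-1q} directly. The trick is that a single uniform shift by $a$ in each coordinate, followed by $d$ iterations of subadditivity, breaks $f(x)$ into $2^d$ pieces each living in a compact subset of one specific open orthant, precisely where Corollary \ref{cor:subadd-bounded-Rw} applies. Once the upper bound is in hand, the elementary fact that $f$ vanishes nonnegatively on every coordinate hyperplane turns the lower bound into a one-line deduction.
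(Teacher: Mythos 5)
Your proof is correct, and it takes a genuinely different and more economical route than the paper's. The paper works its way in from the open orthants by a multi-stage covering argument: it first handles regions straddling one coordinate hyperplane, then two, and so on down to a neighbourhood of the origin, at each stage shifting the ``small'' coordinates by $\pm 1$ and invoking \emph{both} the upper and the lower bounds obtained at the previous stage; the bookkeeping grows with $d$ and the paper only writes out the case $d=3$. You instead treat an arbitrary symmetric cube $K=[-N,N]^d$ in one shot: a single uniform shift $x_i=(x_i+a)+(-a)$ with $a>N$, iterated through all $d$ coordinates, splits $f(x)$ into $2^d$ terms indexed by $w\in\deux^d$, each evaluated on a fixed compact subset of the orthant $\reals_w$, so Corollary \ref{cor:subadd-bounded-Rw} delivers the upper bound immediately and uniformly in $d$. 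Your lower bound is also slicker than the paper's: from $0\leq f(0,x_2,\ldots,x_d)\leq f(x_1,\ldots,x_d)+f(-x_1,x_2,\ldots,x_d)$ (the monoid inequality that the paper only records later, as Lemma \ref{lem:subadd-monoid}, for use in Theorem \ref{thm:fekete-Rd}) together with the symmetry of $K$, you get $f\geq -M$ using only the upper bound already in hand, with no separate lower-bound induction. The two minor points worth making explicit in a write-up are the ones you already flag: the restriction of $f$ to $\reals_w$ is componentwise subadditive because $\positives$ and $\negatives$ are subsemigroups of $\reals$, and the degenerate boxes $\prod_{i:w_i=0}[a-N,a+N]\times\prod_{i:w_i=1}\{-a\}$ are still compact subsets of $\reals_w$, so the corollary applies to them. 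What the paper's longer argument buys in exchange is boundedness on compacts of each intermediate open region (demispaces, orthant pairs, etc.) as a by-product, but for the stated theorem your argument is strictly simpler.
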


\begin{proof}
  It is sufficient to show finitely many open sets
  \begin{math}
    V_1, \ldots, V_n
  \end{math}
  such that $f$ is bounded on the compacts of each $V_i$ and:
  \begin{displaymath}
    \reals^d
    =
    \left( \bigcup_{w \in \deux^d} \reals_w \right)
    \cup
    \left( \bigcup_{i=1}^n V_i \right) \,.
  \end{displaymath}
  We give the argument for $d=3$: the ideas for arbitrary $d\geq1$ are
  similar. Let
  \begin{math}
    I = [-1/2, 1/2]
  \end{math}
  and
  \begin{math}
    U = [-3/2, -1/2] \cup [1/2, 3/2]
  \end{math}.

  We start by proving that $f$ is bounded in every compact subset of
  the open set
  \begin{displaymath}
    Z_{00}
    =
    \{ (x, y, z) \in \reals^3 \mid x>0, y>0 \}
    =
    \reals_{000} \cup \reals_{001} \cup D_{00} \,,
  \end{displaymath}
  where
  \begin{math}
    D_{00} = \{ (x, y, z) \in \reals^3 \mid x>0, y>0, z=0 \}
  \end{math}
  is the first quadrant of the $XY$ plane. To do this, we only need to
  show that $f$ is bounded in every set of the form
  \begin{math}
    H = [a, b] \times [a, b] \times I
  \end{math}.
  Let
  \begin{math}
    V = [a, b] \times [a, b] \times U
  \end{math}:
  if $(x,y,z)\in{H}$, then $(x,y,z-1)$ and $(x,y,z+1)$ are both in
  $V$. Let $T$ and $t$ be an upper bound and a lower bound for $f$ in
  $V$, respectively: then for every $(x,y,z)\in{H}$,
  \begin{displaymath}
    f(x, y, z)
    \leq
    f(x, y, z-1) + f(x, y, 1)
    \leq
    2T
  \end{displaymath}
  and
  \begin{displaymath}
    f(x, y, z)
    \geq
    f(x, y, z+1) - f(x, y, 1)
    \geq
    t - T \,.
  \end{displaymath}
  By similar arguments, $f$ is bounded in every compact subset of
  every subset of $\reals^3$ which is the union of two adjacent
  orthants and the corresponding ``quadrant''. As for each open orthant
  there are three which border it by one ``quadrant'', there are
  \begin{math}
    \dfrac{8 \cdot 3}{2} = 12
  \end{math}
  such subsets.

  We now show that $f$ is bounded in every compact subset of the open
  ``upper demispace''
  \begin{math}
    Z_0 = \{ (x, y, z) \in \reals^3 \mid z > 0 \}
  \end{math}.
  To do so, it will suffice to show that $f$ is bounded in every set
  of the form
  \begin{math}
    L = I \times I \times [a, b]
  \end{math}
  with $0<a<b$. Let
  \begin{math}
    W = U \times U \times [a, b]
  \end{math}
  and let $S$ and $s$ be an upper bound for $f$ in $W$, respectively:
  then for every $(x,y,z)\in{L}$,
  \begin{eqnarray*}
    f(x, y, z)
    & \leq &
    f(x-1, y, z) + f(1, y, z)
    \\ & \leq &
    f(x-1, y-1, z) + f(x-1, 1, z) + f(1, y-1, z) + f(1, 1, z)
    \\ & \leq &
    4S
  \end{eqnarray*}
  and
  \begin{eqnarray*}
    f(x, y, z)
    & \geq &
    f(x+1, y, z) - f(1, y, z)
    \\ & \geq &
    f(x+1, y+1, z) - f(x+1, 1, z) - f(1, y, z)
    \\ & \geq &
    s - 2S \,.
  \end{eqnarray*}
  Similarly, $f$ is bounded in each of the other five open
  ``demispaces''.

  To conclude the proof, we only need to show that $f$ is bounded in
  \begin{math}
    K = I \times I \times I
  \end{math}.
  Let
  \begin{math}
    E = U \times U \times U
  \end{math}
  and let $M$ and $m$ be an upper bound and a lower bound for $f$ in
  $E$, respectively: then for every $(x,y,z)\in{K}$,
  \begin{eqnarray*}
    f(x, y, z)
    & \leq &
    f(x-1, y, z) + f(1, y, z)
    \\ & \leq &
    f(x-1, y-1, z) + f(x-1, 1, z) + f(1, y-1, z) + f(1, 1, z)
    \\ & \leq &
    f(x-1, y-1, z-1) + f(x-1, z-1, 1)
    \\ & &
    + f(x-1, 1, z-1) + f(x-1, 1, 1)
    \\ & &
    + f(1, y-1, z-1) + f(1, y-1, 1)
    \\ & &
    + f(1, 1, z-1) + f(1, 1, 1)
    \\ & \leq & 8M
  \end{eqnarray*}
  and
  \begin{eqnarray*}
    f(x, y, z)
    & \geq &
    f(x+1, y, z) - f(1, y, z)
    \\ & \geq &
    f(x+1, y+1, z) - f(x+1, 1, z) - f(1, y, z)
    \\ & \geq &
    f(x+1, y+1, z+1) - f(x+1, y+1, 1)
    \\ & &
    - f(x+1, 1, z) - f(1, y, z)
    \\ & \geq &
    m - 3M \,.
  \end{eqnarray*}
\end{proof}

Note that the argument of Lemma \ref{lem:subadd-levelset-d} also works
if $f$ is subadditive, rather than componentwise subadditive. In this
case, however, the denominator in (\ref{eq:subadd-levelset-d}) and in
the thesis is 2 rather than $2^d$. A more complex variant of it can
then be stated, where $f$ is a function of $k$ variables $x_i$, each
taking values in an orthant of $\positives^{d_i}$: and the denominator
would then be $2^k$. From this, a generalization of Theorem
\ref{thm:subadd-bounded-1q} to the case of componentwise functions of
$k$ variables, the $i$th of which takes values in $\positives^{d_i}$,
can be derived.

\section{%
  Fekete's lemma for componentwise subadditive functions of $d$ real variables%
}
\label{sec:fekete-d}
  
We can now state and prove the main result of this paper.

\begin{theorem}[Fekete's lemma in $\positives^d$]
  \label{thm:fekete-positives-d}
  Let $d\geq1$ and let
  \begin{math}
    f : \positives^d \to \reals
  \end{math}
  be componentwise subadditive. Then:
  \begin{equation}
    \label{eq:fekete-positives-d}
    \lim_{(x_1, \ldots, x_d) \to \mainorthant^{d}}
    \frac{f(x_1, \ldots, x_d)}{x_1 \cdots x_d}
    =
    \inf_{x_1, \ldots, x_d \in \positives}
    \frac{f(x_1, \ldots, x_d)}{x_1 \cdots x_d}
    \,,
  \end{equation}
  which can be $-\infty$. In addition, for every permutation $\sigma$
  of $\slice{1}{d}$,
  \begin{equation}
    \label{eq:fekete-positives-d-multiplelimit}
    \lim_{x_{\sigma(1)} \to +\infty} \ldots
    \lim_{x_{\sigma(d)} \to +\infty}
    \frac{f(x_1, \ldots, x_d)}{x_1 \cdots x_d}
    =
    \lim_{(x_1, \ldots, x_d) \to \mainorthant^{d}}
    \frac{f(x_1, \ldots, x_d)}{x_1 \cdots x_d} \,,
  \end{equation}
  regardless of any of the limits being finite or (negatively)
  infinite.
\end{theorem}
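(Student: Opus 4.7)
The plan is to rescue, now that Theorem~\ref{thm:subadd-bounded-1q} supplies compact boundedness, the remainder-term argument sketched for $d=2$ just before Example~\ref{ex:bound-unbound}, and then to derive the iterated-limit statement~(\ref{eq:fekete-positives-d-multiplelimit}) from the simultaneous one by a repeated application of Lemma~\ref{lem:positive-variables} together with the one-variable Fekete lemma. I fix $(t_1,\ldots,t_d)\in\positives^d$; every $x_i$ large enough can be written uniquely as $x_i = q_i t_i + r_i$ with $q_i\in\positiveint$ and $r_i\in[t_i,2t_i)$. Applying subadditivity in the first coordinate $q_1$ times, then in the second coordinate $q_2$ times to each of the two resulting summands, and so on through all $d$ coordinates, I will obtain
\[
  f(x_1,\ldots,x_d)
  \;\leq\;
  \sum_{w\in\deux^d}
  \left(\prod_{i:\,w_i=0} q_i\right)
  f\!\left(z_1^{w_1},\ldots,z_d^{w_d}\right),
\]
where $z_i^0 = t_i$ and $z_i^1 = r_i$; for $d=2$ this is the four-term bound displayed in Section~\ref{sec:boundedness}.

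Dividing by $x_1\cdots x_d$, the principal summand ($w=0^d$) is $\bigl(\prod_i q_i/x_i\bigr)\,f(t_1,\ldots,t_d)$ and tends to $f(t_1,\ldots,t_d)/(t_1\cdots t_d)$ as $(x_1,\ldots,x_d)\to\mainorthant^d$, because $q_i/x_i\to 1/t_i$. Each of the remaining $2^d-1$ summands carries at least one factor $1/x_i$ (from an index with $w_i=1$), while the arguments $(z_1^{w_1},\ldots,z_d^{w_d})$ all lie in the compact box $K=\prod_{i=1}^d[t_i,2t_i]$, on which $f$ is bounded by Theorem~\ref{thm:subadd-bounded-1q}; hence those summands vanish in the limit. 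This will give
\[
  \limsup_{(x_1,\ldots,x_d)\to\mainorthant^d}
  \frac{f(x_1,\ldots,x_d)}{x_1\cdots x_d}
  \;\leq\;
  \frac{f(t_1,\ldots,t_d)}{t_1\cdots t_d},
\]
and taking the infimum over $(t_1,\ldots,t_d)$, combined with the trivial $\inf\leq\liminf$ from~(\ref{eq:liminf-limsup}), will yield~(\ref{eq:fekete-positives-d}). The argument is insensitive to whether the infimum is finite or $-\infty$: in the latter case the same inequality shows the $\limsup$ is dominated by arbitrarily large negative numbers.

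For~(\ref{eq:fekete-positives-d-multiplelimit}), I will fix a permutation $\sigma$ and strip off the limits from the innermost outward. The innermost limit $\lim_{x_{\sigma(d)}\to+\infty} f/(x_1\cdots x_d)$ equals, by the classical one-variable Fekete lemma applied to the subadditive map $x_{\sigma(d)}\mapsto f$, the infimum $\inf_{x_{\sigma(d)}>0} f/(x_1\cdots x_d)$; multiplied by $\prod_{i\neq\sigma(d)} x_i$ this becomes $\inf_{x_{\sigma(d)}} f/x_{\sigma(d)}$, which by Lemma~\ref{lem:positive-variables} (with $k=1$, $j_1=\sigma(d)$) is componentwise subadditive in the remaining $d-1$ variables. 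Iterating, the next limit becomes the infimum over $x_{\sigma(d-1)}$ of a componentwise subadditive function (by Lemma~\ref{lem:positive-variables} with $k=2$), and so on through $d$ steps. After the last step the iterated limit equals $\inf_{x_{\sigma(1)}>0}\cdots\inf_{x_{\sigma(d)}>0} f/(x_1\cdots x_d) = \inf_{x_1,\ldots,x_d>0} f/(x_1\cdots x_d)$, which by~(\ref{eq:fekete-positives-d}) is the simultaneous limit. Extended-real values cause no trouble, because Fekete's one-variable lemma handles the case of infimum $-\infty$ uniformly.

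The hardest step will be the vanishing of the $2^d-1$ remainder terms: in the integer setting of Proposition~\ref{prop:fekete-positiveint-d} these terms involve only finitely many values of $f$ and are bounded for free, but in the real setting one genuinely needs $f$ to be bounded on the compact box $K$, which is exactly what Theorem~\ref{thm:subadd-bounded-1q} supplies. Once that boundedness is in hand, everything else is a careful repackaging of one-variable Fekete together with the subadditivity-preservation of Lemma~\ref{lem:positive-variables}.
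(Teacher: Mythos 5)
Your proposal is correct and follows essentially the same route as the paper's proof: the same decomposition $x_i = q_i t_i + r_i$ into a $2^d$-term subadditivity estimate, with Theorem~\ref{thm:subadd-bounded-1q} supplying the bound on $\prod_i [t_i,2t_i]$ that kills the $2^d-1$ remainder terms, followed by the $\limsup \leq \inf \leq \liminf$ sandwich. The iterated-limit part likewise matches the paper's argument, combining Lemma~\ref{lem:positive-variables}, the one-variable Fekete lemma, and Lemma~\ref{lem:multiple-inf} (which you invoke implicitly when merging the iterated infima into the joint one).
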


The proof of (\ref{eq:fekete-positives-d}) is similar to that of
\cite[Theorem 1]{capobianco2008}, with an important change; for
convenience of the reader, we report it entirely. The proof of
(\ref{eq:fekete-positives-d-multiplelimit}) relies on
(\ref{eq:fekete-positives-d}), the original Fekete's lemma, and the
following Lemma \ref{lem:multiple-inf}; the argument remains valid for
the case of positive integer variables.

\begin{lemma}
  \label{lem:multiple-inf}
  Let $u$ be a real-valued function depending on $d\geq1$ variables
  $x_i$, no matter of what type. Then for every permutation $\sigma$
  of $\slice{1}{d}$,
  \begin{equation}
    \label{eq:multiple-inf}
    \inf_{x_{\sigma(1)}} \ldots \inf_{x_{\sigma(d)}} u(x_1, \ldots, x_d)
    =
    \inf_{x_1, \ldots, x_d} u(x_1, \ldots, x_d) \,.
  \end{equation}
\end{lemma}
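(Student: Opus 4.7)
The plan is to reduce the statement to the fundamental two-variable identity $\inf_{x} \inf_{y} u(x,y) = \inf_{(x,y)} u(x,y)$ and then simply observe that the right-hand side is manifestly symmetric in its arguments, so that the order of iteration cannot affect the value. This bypasses the need to argue about the permutation $\sigma$ directly, and avoids any case analysis on the cycle structure of $\sigma$.

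First I would establish the two-variable case. Write $L = \inf_{(x,y)} u(x,y)$ and $R = \inf_{x} \inf_{y} u(x,y)$, both interpreted in $\reals \cup \{-\infty, +\infty\}$. Since $u(x,y) \geq L$ for every $x$ and $y$, taking the infimum in $y$ yields $\inf_{y} u(x,y) \geq L$, and a subsequent infimum in $x$ gives $R \geq L$. Conversely, for any specific pair $(x_0, y_0)$, one has $u(x_0, y_0) \geq \inf_{y} u(x_0, y) \geq R$, so $R$ is a lower bound for $u$ on the full product domain, whence $L \geq R$. Hence $L = R$.

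Second, I would induct on $d$. The case $d = 1$ is immediate. For the inductive step, I peel off the outermost infimum and apply the inductive hypothesis to the remaining $d - 1$ nested infimums, then invoke the two-variable identity to collapse the resulting two-level expression into a single joint infimum. This produces
$$
\inf_{x_{\sigma(1)}} \ldots \inf_{x_{\sigma(d)}} u(x_1, \ldots, x_d)
=
\inf_{x_1, \ldots, x_d} u(x_1, \ldots, x_d) \,,
$$
with the right-hand side depending only on the function and its domain, not on $\sigma$. Equating the two iterated expressions for different $\sigma$ through this common joint value gives (\ref{eq:multiple-inf}).

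There is no substantial obstacle here: the lemma is essentially a bookkeeping fact about infimums over a product set. The only care needed concerns the extended-real convention when some of the infimums equal $-\infty$, and both chains of inequalities above remain valid in that regime under the standard ordering of $\overline{\reals}$.
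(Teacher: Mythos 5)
Your proof is correct and rests on the same elementary content as the paper's: the joint infimum lower-bounds every value of $u$, and every point value dominates the iterated infimum. The paper's sketch argues the inequality $L\leq R$ directly by picking a $\delta$-approximate minimizer $(z_1,\ldots,z_d)$ of the joint infimum, whereas you organize the argument as an induction on $d$ reducing to the two-variable identity; this is only an organizational difference, though your inequality-chain version does sidestep the minor awkwardness of forming $R+\delta$ when $R=-\infty$, which the paper's sketch leaves implicit.
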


\begin{proof}[Sketch of proof.]
  Let $L$ and $R$ be the left- and right-hand side of
  (\ref{eq:multiple-inf}), respectively. It is easy to see that
  $L\geq{R}$. Let now $\delta>0$: there exist
  $z_1,\ldots,z_d\in\positives$ such that
  \begin{math}
    u(z_1, \ldots, z_d) < R + \delta
  \end{math},
  so a fortiori $L<R+\delta$ too; as $\delta>0$ is arbitrary,
  $L\leq{R}$.
\end{proof}

\begin{proof}[Proof of Theorem \ref{thm:fekete-positives-d}]
  For every
  \begin{math}
    i \in \{ 1, \ldots, d \}
  \end{math}
  and
  \begin{math}
    x_1, \ldots, x_d \in \positives,
  \end{math}
  if $x_i=qt+r$ with $q\in\positiveint$ and $r,t\in\positives$, then:
  \begin{displaymath}
    f(x_1, \ldots, x_i, \ldots, x_d)
    \leq
    q \cdot f(x_1, \ldots, t, \ldots, x_d)
    + f(x_1, \ldots, r, \ldots, x_d)
    \;.
  \end{displaymath}
  Fix
  \begin{math}
    t_1, \ldots, t_d \in \positives
  \end{math}.
  For every $i\in\slice{1}{d}$ and $x_i\geq{2t_i}$ there exist unique
  $q_i\in\positiveint$ and
  \begin{math}
    r_i \in \left[ t_i, 2 t_i \right)
  \end{math}
  such that
  \begin{math}
    x_i = q_i t_i + r_i
  \end{math}.
  For every $i\in\slice{1}{d}$ let
  \begin{math}
    y_i^{(0)} = r_i
  \end{math}
  and
  \begin{math}
    y_i^{(1)} = t_i
  \end{math}:
  by repeatedly applying subadditivity, once per each variable, we
  find:
  \begin{equation}
    \label{eq:fekete-d-estimate}
    f(x_1, \ldots, x_d)
    \leq
    \sum_{w \in \deux^d}
    q_1^{w_1} \cdots q_d^{w_d}
    \cdot f \left( y_1^{(w_1)}, \ldots, y_d^{(w_d)} \right)
    \;.
  \end{equation}
  Now, on the right-hand side of (\ref{eq:fekete-d-estimate}), each
  occurrence of $f$ has $k$ arguments chosen from the $t_i$'s and
  $d-k$ chosen from the $r_i$'s, is multiplied by the $q_i$'s
  corresponding to the $t_i$'s, and is bounded from above by the
  constant
  \begin{displaymath}
    M = \sup \{
    f(y_1, \ldots, y_d) \mid y_i \in [t_i,2t_i] \; \forall i \in \slice{1}{d}
    \} \,,
  \end{displaymath}
  which exists because of Theorem \ref{thm:subadd-bounded-1q}. Such
  boundedness is crucial for the proof, and was ensured for free in
  the case of positive integer variables from \cite{capobianco2008},
  but had to be proved for positive real variables.

  By dividing both sides of (\ref{eq:fekete-d-estimate}) by
  $x_1\cdots{x_d}$ we get:
  \begin{equation}
    \label{eq:fekete-d-estimate2}
    \frac{f(x_1, \ldots, x_d)}{x_1 \cdots x_d}
    \leq
    \frac{q_1 \cdots q_d}{x_1 \cdots x_d} f(t_1,\ldots,t_d)
    + M \cdot \sum_{w \in \deux^d \setminus \{1^d\}}
    \frac{q_1^{w_1} \cdots q_d^{w_d}}{x_1\cdots x_d}
    \;.
  \end{equation}
  where $1^d$ is the binary word of length $d$ where all the bits are 1.

  By construction,
  \begin{math}
    \lim_{x_i \to \infty} q_i / x_i = 1/t_i
  \end{math}.
  Given $\varepsilon>0$, choose
  \begin{math}
    x_1^{(\varepsilon)}, \ldots, x_d^{(\varepsilon)} \in \positives
  \end{math}
  such that, if
  \begin{math}
    x_i \geq x_i^{(\varepsilon)}
  \end{math}
  for each $i\in\slice{1}{d}$, then the following hold:
  \begin{enumerate}
  \item
    \begin{math}
      \dfrac{f(x_1, \ldots, x_d)}{x_1 \cdots x_d}
      <
      \dfrac{f(t_1, \ldots, t_d)}{t_1 \cdots t_d}
      + \dfrac{\varepsilon}{2^d}
    \end{math};
  \item
    \begin{math}
      \dfrac{q_1^{w_1} \cdots q_d^{w_d}}{x_1\cdots x_d}
      < \dfrac{\varepsilon}{M \cdot 2^d}
    \end{math}
    for every
    \begin{math}
      w \in \deux^d \setminus \{ 1^d \}
    \end{math}.
  \end{enumerate}
  This is possible because if
  \begin{math}
    w \neq 1^d
  \end{math},
  then at least one of the $q_i^{w_i}$ equals 1. For such
  \begin{math}
    x_1, \ldots, x_d
  \end{math}
  it is:
  \begin{displaymath}
    \frac{f(x_1, \ldots, x_d)}{x_1 \cdots x_d}
    <
    \frac{f(t_1, \ldots, t_d)}{t_1 \cdots t_d} + \varepsilon
    \;.
  \end{displaymath}
  As $\varepsilon>0$ is arbitrary, it must be:
  \begin{displaymath}
    \limsup_{(x_1, \ldots,x_d) \to \mainorthant^d}
    \frac{f(x_1, \ldots, x_d)}{x_1 \cdots x_d}
    \leq
    \frac{f(t_1, \ldots, t_d)}{t_1 \cdots t_d}
    \,.
  \end{displaymath}
  But the $t_i$'s are also arbitrary, hence:
  \begin{eqnarray*}
    \limsup_{(x_1, \ldots, x_d) \to \mainorthant^d}
    \frac{f(x_1 ,\ldots, x_d)}{x_1 \cdots x_d}
    & \leq &
    \inf_{t_1, \ldots, t_d \in \positives}
    \frac{f(t_1, \ldots, t_d)}{t_1 \cdots t_d}
    \\ & \leq &
    \liminf_{(x_1, \ldots, x_d) \to \mainorthant^d}
    \frac{f(x_1, \ldots, x_d)}{x_1 \cdots x_d}
    \;,
  \end{eqnarray*}
  which yields (\ref{eq:fekete-positives-d}).

  Now, by Lemma \ref{lem:positive-variables}, for every choice of
  \begin{math}
    i, j_1, \ldots, j_k \in \slice{1}{d}
  \end{math}
  all different, and however fixed the remaining variables, the
  function (\ref{eq:positive-variables}) is subadditive. Then
  (\ref{eq:fekete-positives-d-multiplelimit}) follows from Lemma
  \ref{lem:multiple-inf} by repeated application of the original
  Fekete's lemma:
  \begin{eqnarray*}
    \lim_{x_{\sigma(1)} \to +\infty} \ldots
    \lim_{x_{\sigma(d)} \to +\infty}
    \frac{f(x_1, \ldots, x_d)}{x_1 \cdots x_d}
    & = &
    \inf_{x_{\sigma(1)} > 0}
    \lim_{x_{\sigma(2)} \to +\infty} \ldots
    \lim_{x_\sigma(d) \to \infty}\frac{f(x_1, \ldots, x_d)}{x_1 \cdots x_d}
    \\ & = & \ldots
    \\ & = &
    \inf_{x_{\sigma(1)} > 0} \ldots
    \inf_{x_{\sigma(d)} > 0}
    \frac{f(x_1, \ldots, x_d)}{x_1 \cdots x_d}
    \\ & = &
    \inf_{x_1, \ldots, x_d > 0}
    \frac{f(x_1, \ldots, x_d)}{x_1 \cdots x_d}
    \\ & = &
    \lim_{(x_1, \ldots, x_d) \to \mainorthant^{d}}
    \frac{f(x_1, \ldots, x_d)}{x_1 \cdots x_d}
    \,.
  \end{eqnarray*}
\end{proof}

From Theorem \ref{thm:fekete-positives-d} and Proposition
\ref{prop:subadditive-componentwise-orthant} follows:

\begin{theorem}
  \label{thm:fekete-Rd}
  Let $d\geq1$, let $w,w'\in\deux^d$ and let
  \begin{math}
    f : \reals_w \to \reals
  \end{math}
  be componentwise subadditive.
  \begin{enumerate}
  \item
    \label{it:fekete-Rd-even}
    If $w$ contains evenly many 1s, then:
    \begin{equation}
      \lim_{(x_1, \ldots, x_d) \to \orthant_w}
      \frac{f(x_1, \ldots, x_d)}{x_1 \cdots x_d}
      =
      \inf_{(x_1, \ldots, x_d) \in \reals_w}
      \frac{f(x_1, \ldots, x_d)}{x_1 \cdots x_d}
    \end{equation}
    is not $+\infty$, but can be $-\infty$.
  \item
    \label{it:fekete-Rd-odd}
    If $w$ contains oddly many 1s, then:
    \begin{equation}
      \lim_{(x_1, \ldots, x_d) \to \orthant_w}
      \frac{f(x_1, \ldots, x_d)}{x_1 \cdots x_d}
      =
      \sup_{(x_1, \ldots, x_d) \in \reals_w}
      \frac{f(x_1, \ldots, x_d)}{x_1 \cdots x_d}
    \end{equation}
    is not $-\infty$, but can be $+\infty$.
  \item
    \label{it:fekete-Rd-all}
    Suppose now $w$ contains evenly many 1s, $w'$ differs from $w$ in
    exactly one coordinate, and $f$ is defined and componentwise
    subadditive in
    \begin{math}
      \reals_w \cup \reals_{w'} \cup U_{w,w'}
    \end{math},
    where:
    \begin{displaymath}
      U_{w,w'} = \{ x \in \reals^d
      \mid x_i=0,
      (x_1, \ldots, x_{i-1}, 1, x_{i+1}, \ldots, x_d)
      \in \reals_w \cup \reals_{w'}
      \}
    \end{displaymath}
    is the boundary between $\reals_w$ and $\reals_{w'}$. Then:
    \begin{equation}
      \lim_{(x_1, \ldots, x_d) \to \orthant_{w'}}
      \frac{f(x_1, \ldots, x_d)}{x_1 \cdots x_d}
      \leq
      \lim_{(x_1, \ldots, x_d) \to \orthant_w}
      \frac{f(x_1, \ldots, x_d)}{x_1 \cdots x_d} \,;
    \end{equation}
    consequently, both limits are finite.
  \end{enumerate}
\end{theorem}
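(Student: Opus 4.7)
The plan is to reduce each of the three cases to Theorem \ref{thm:fekete-positives-d} via the change of variables from Proposition \ref{prop:subadditive-componentwise-orthant}. Setting $x_{w,i} = (-1)^{w_i} x_i$ gives $x_1 \cdots x_d = (-1)^{|w|} x_{w,1} \cdots x_{w,d}$, and the simultaneous limit $(x_1, \ldots, x_d) \to \orthant_w$ corresponds to $(x_{w,1}, \ldots, x_{w,d}) \to \mainorthant^d$. In case \ref{it:fekete-Rd-even} the sign $(-1)^{|w|}$ is $+1$, so the ratio is unchanged and Theorem \ref{thm:fekete-positives-d} identifies the limit with the infimum over $\reals_w$; this can be $-\infty$ but is never $+\infty$ because $f_w$ is real-valued. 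In case \ref{it:fekete-Rd-odd} the sign is $-1$, so the ratio picks up a minus sign and the limit equals $-\inf = \sup$ of $f(x)/(x_1 \cdots x_d)$ over $\reals_w$, which can be $+\infty$ but not $-\infty$.

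For part \ref{it:fekete-Rd-all}, I first extend the same change of variables to $\reals_w \cup \reals_{w'} \cup U_{w,w'}$ (the coordinatewise sign flip preserves componentwise subadditivity), and then permute coordinates, reducing to the case $w = 0^d$ and $w' = (1, 0, \ldots, 0)$. The domain of $f$ then contains, for every fixed $(x_2, \ldots, x_d) \in \positives^{d-1}$, the entire real line in the first coordinate. Applying subadditivity in $x_1$ with $a = b = 0$ gives $f(0, x_2, \ldots, x_d) \leq 2 f(0, x_2, \ldots, x_d)$, hence $f(0, x_2, \ldots, x_d) \geq 0$; with $a = x_1 > 0$ and $b = -x_1$ it gives $f(0, x_2, \ldots, x_d) \leq f(x_1, x_2, \ldots, x_d) + f(-x_1, x_2, \ldots, x_d)$. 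Combining yields $f(-x_1, x_2, \ldots, x_d) \geq -f(x_1, x_2, \ldots, x_d)$, and dividing by the negative quantity $(-x_1) \cdot x_2 \cdots x_d$ flips the inequality into
\begin{displaymath}
\frac{f(-x_1, x_2, \ldots, x_d)}{(-x_1) \cdot x_2 \cdots x_d}
\leq
\frac{f(x_1, x_2, \ldots, x_d)}{x_1 \cdot x_2 \cdots x_d} \,.
\end{displaymath}

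The map $(x_1, x_2, \ldots, x_d) \mapsto (-x_1, x_2, \ldots, x_d)$ is a bijection between $\orthant_w = \mainorthant^d$ and $\orthant_{w'}$ that respects cofinality, so taking simultaneous limits on both sides of the displayed inequality, both of which exist by parts \ref{it:fekete-Rd-even} and \ref{it:fekete-Rd-odd}, yields the asserted $\lim_{\orthant_{w'}} \leq \lim_{\orthant_w}$. Combined with the sign constraints from the previous parts (the right-hand side is not $+\infty$, the left-hand side is not $-\infty$), this squeezes both limits into $\reals$. The only non-routine step I expect is the sign estimate $f(0, x_2, \ldots, x_d) \geq 0$, and it is precisely this step that requires $f$ to be defined on the boundary $U_{w,w'}$: without access to the value at $x_1 = 0$, one would be forced to replace $0$ by some $\varepsilon > 0$ and would be left with an uncontrolled additive error term $f(\varepsilon, x_2, \ldots, x_d)$.
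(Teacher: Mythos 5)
Your proposal is correct and follows essentially the same route as the paper: parts \ref{it:fekete-Rd-even} and \ref{it:fekete-Rd-odd} via the sign-flip reduction of Proposition \ref{prop:subadditive-componentwise-orthant} to Theorem \ref{thm:fekete-positives-d}, and part \ref{it:fekete-Rd-all} via the inequality $f(x_1,\ldots,x_i,\ldots,x_d)+f(x_1,\ldots,-x_i,\ldots,x_d)\geq 0$, which you derive exactly as in the paper's Lemma \ref{lem:subadd-monoid} (first $f(0,\ldots)\geq 0$ from $a=b=0$, then splitting $0=x_i+(-x_i)$). The only cosmetic difference is that you pass a pointwise inequality to the limit along the cofinal reflection map, whereas the paper writes the difference of the two limits as the limit of the nonnegative sum; both give the same conclusion, including the finiteness argument.
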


For $d=1$ we recover \cite[Theorem 6.6.1]{hille1948}. To prove Theorem
\ref{thm:fekete-Rd}, we make use of the following result, whose proof
we leave to the reader.

\begin{lemma}
  \label{lem:subadd-monoid}
  Let $S$ be a semigroup and
  \begin{math}
    f : S \to \reals
  \end{math}
  be a subadditive function. If $S$ is a monoid with identity $e$,
  then $f(e)\geq0$. If, in addition, $S$ is a group, then
  \begin{math}
    f(x) + f(x^{-1}) \geq 0
  \end{math}
  for every $x\in{S}$.
\end{lemma}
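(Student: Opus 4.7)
The plan is to deduce both statements directly from the defining inequality $f(x \cdot y) \leq f(x) + f(y)$ by substituting specially chosen elements. Since the identity $e$ and the pair $(x, x^{-1})$ are exactly the ``degenerate'' cases where a product collapses, they should give the two desired bounds almost for free.

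First I would handle the monoid case. Setting $x = y = e$ in the subadditivity inequality yields $f(e) = f(e \cdot e) \leq f(e) + f(e)$, and subtracting $f(e)$ from both sides gives $f(e) \geq 0$. The only subtlety is that this step presupposes $f(e)$ is a finite real number; since the paper allows values $\pm\infty$ but not both, one should note that if $f(e) = -\infty$ the subtraction is not immediately licit, but in that case the inequality $-\infty \leq f(e) + f(e) = -\infty$ is vacuous and, moreover, $f(e) = -\infty$ would contradict the stated convention unless $f$ is allowed to take $-\infty$; a brief remark suffices to rule out this pathology (for instance, one may simply state the lemma under the hypothesis that $f$ is finite at $e$, which is the case of interest).

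For the group case, assuming $S$ has identity $e$ and every $x \in S$ has an inverse $x^{-1}$, I would apply subadditivity to the product $x \cdot x^{-1} = e$ to obtain
\begin{equation*}
  f(e) = f(x \cdot x^{-1}) \leq f(x) + f(x^{-1}) \, .
\end{equation*}
Combining this with $f(e) \geq 0$ from the first part gives $f(x) + f(x^{-1}) \geq 0$, as required.

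There is essentially no obstacle here: the argument is a two-line application of the subadditive inequality to the two tautological identities $e = e \cdot e$ and $e = x \cdot x^{-1}$. The only thing to be careful about is the convention on infinite values, which is why the author likely left the proof as an exercise.
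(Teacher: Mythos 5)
Your proof is correct and is evidently the intended argument (the paper leaves this proof to the reader): apply subadditivity to $e = e \cdot e$ to get $f(e) \geq 0$, then to $e = x \cdot x^{-1}$ to get $f(x) + f(x^{-1}) \geq f(e) \geq 0$. Your remark on the $\pm\infty$ convention is a reasonable precaution, though in the lemma's only application (Theorem \ref{thm:fekete-Rd}) the function is finite-valued, so no issue arises.
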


\begin{proof}[Proof of Theorem \ref{thm:fekete-Rd}]
  For
  \begin{math}
    (x_1, \ldots, x_d) \in \orthant_w
  \end{math}
  and $i\in\slice{1}{d}$ let $x_{w,i}$ and $f_w$ be defined as in
  Proposition \ref{prop:subadditive-componentwise-orthant}. If $w$
  contains evenly many 1s, then
  \begin{math}
    x_1 \cdots x_d = x_{w,1} \cdots x_{w,d}
  \end{math}
  and:
  \begin{eqnarray*}
    \lim_{x \to \orthant_w}
    \frac{f(x_1, \ldots, x_d)}{x_1 \cdots x_d}
    & = &
    \lim_{x_w \to \mainorthant^d}
    \frac{f_w(x_{w,1}, \ldots, x_{w,d})}{x_{w,1} \cdots x_{w,d}}
    \\ & = &
    \inf_{x_w \in \positives^d}
    \frac{f_w(x_{w,1}, \ldots, x_{w,d})}{x_{w,1} \cdots x_{w,d}}
    \\ & = &
    \inf_{x \in \reals_w}
    \frac{f(x_1, \ldots, x_d)}{x_1 \cdots x_d}
    \,.
  \end{eqnarray*}
  If $w$ contains oddly many 1s, then
  \begin{math}
    x_1 \cdots x_d = - x_{w,1} \cdots x_{w,d}
  \end{math}
  and:
  \begin{eqnarray*}
    \lim_{x \to \orthant_w} \frac{f(x_1, \ldots, x_d)}{x_1 \cdots x_d}
    & = &
    - \lim_{x_w \to \mainorthant^d}
    \frac{f_w(x_{w,1}, \ldots, x_{w,d})}{x_{w,1} \cdots x_{w,d}}
    \\ & = &
    - \inf_{x_w \in \positives^d}
    \frac{f_w(x_{w,1}, \ldots, x_{w,d})}{x_{w,1} \cdots x_{w,d}}
    \\ & = &
    \sup_{x \in \reals_w} \frac{f(x_1, \ldots, x_d)}{x_1 \cdots x_d}
    \,.
  \end{eqnarray*}
  Suppose now $w$ has evenly many 1s and $w'$ differs from $w$ only in
  component $i$, and $f$ is defined and componentwise subadditive in
  \begin{math}
    \reals_w \cup \reals_{w'} \cup U_{w,w'}
  \end{math}.
  Then for every
  \begin{math}
    x_1, \ldots, x_{i-1}, x_{i+1}, \ldots x_d
  \end{math}
  the function
  \begin{math}
    \lambdaxt{x_i}{f(x_1, \ldots, x_i, \ldots, x_d)}
  \end{math}
  is subadditive on $\reals$: by Lemma \ref{lem:subadd-monoid}, for
  every $x_i\in\reals$,
  \begin{displaymath}
    f(x_1, \ldots, x_i, \ldots, x_d)
    + f(x_1, \ldots, -x_i, \ldots, x_d)
    \geq 0 \,.
  \end{displaymath}
  Then:
  \begin{eqnarray*}
    & &
    \lim_{x \to \orthant_w} \frac{f(x_1, \ldots, x_d)}{x_1 \cdots x_d}
    - \lim_{x' \to \orthant_{w'}} \frac{f(x'_1, \ldots, x'_d)}{x'_1 \cdots x'_d}
    \\ & = &
    \lim_{x \to \orthant_w}
    \frac{f(x_1, \ldots, x_i, \ldots, x_d)}{x_1 \cdots x_d}
    + \lim_{x \to \orthant_{w}}
    \frac{f(x_1, \ldots, -x_i, \ldots, x_d)}{x_1 \cdots x_d}
    \\ & = &
    \lim_{x \to \orthant_w} \frac{
      f(x_1, \ldots, x_i, \ldots, x_d)
      + f(x_1, \ldots, -x_i, \ldots, x_d)
    }{
      x_1 \cdots x_d
    }
  \end{eqnarray*}
  is nonnegative. The last passage is valid because the two limits on
  the second line are either finite or $-\infty$.
\end{proof}

As every subnet of a convergent net converges to the same limit, we
get:

\begin{corollary}
  \label{cor:fekete-subsequence}
  Let
  \begin{math}
    f : \positives^d \to \reals
  \end{math}
  be componentwise subadditive and let $T$ be either $\positives$ or
  $\positiveint$. For every
  \begin{math}
    i \in \slice{1}{d}
  \end{math}
  let
  \begin{math}
    x_i(t) : T \to \positives
  \end{math}
  satisfy
  \begin{math}
    \lim_{t \to +\infty} x_i(t) = +\infty
  \end{math}.
  Then:
  \begin{equation}
    \label{eq:fekete-subsequence}
    \lim_{t \to +\infty}
    \frac{f(x_1(t), \ldots, x_d(t))}{x_1(t) \cdots x_d(t)}
    =
    \inf_{x_1, \ldots, x_d > 0} \frac{f(x_1, \ldots, x_d)}{x_1 \cdots x_d}
    \,,
  \end{equation}
  and also
  \begin{equation}
    \label{eq:fekete-subsequence-inf}
    \inf_{t \in T}
    \frac{f(x_1(t), \ldots, x_d(t))}{x_1(t) \cdots x_d(t)}
    =
    \inf_{x_1, \ldots, x_d > 0} \frac{f(x_1, \ldots, x_d)}{x_1 \cdots x_d}
    \,.
  \end{equation}
  In particular,
  \begin{equation}
    \label{eq:fekete-n-d}
    \lim_{n \to \infty} \frac{f(n, \ldots, n)}{n^d}
    =
    \inf_{n \geq 1} \frac{f(n, \ldots, n)}{n^d}
    =
    \inf_{x_1, \ldots, x_d > 0} \frac{f(x_1, \ldots, x_d)}{x_1 \cdots x_d}
    \,.
  \end{equation}
\end{corollary}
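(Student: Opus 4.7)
The plan is to deduce Corollary \ref{cor:fekete-subsequence} directly from Theorem \ref{thm:fekete-positives-d} by exhibiting
\begin{math}
  g(t) = f(x_1(t), \ldots, x_d(t))/(x_1(t) \cdots x_d(t))
\end{math}
as a subnet of the net
\begin{math}
  F(x_1, \ldots, x_d) = f(x_1, \ldots, x_d)/(x_1 \cdots x_d)
\end{math}
on $\mainorthant^d$, and then invoking the general principle, already recorded in Section \ref{sec:background}, that every subnet of a convergent net converges to the same limit.

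First, I would verify the subnet property with the obvious choice $\phi : T \to \positives^d$ given by $\phi(t) = (x_1(t), \ldots, x_d(t))$. The identity $F \circ \phi = g$ is immediate. For cofinality, given any target $(y_1, \ldots, y_d) \in \positives^d$, the hypothesis $\lim_{t \to +\infty} x_i(t) = +\infty$ produces, for each $i \in \slice{1}{d}$, a threshold $t_i \in T$ past which $x_i(t) \geq y_i$; taking the maximum of these finitely many thresholds (possible because $T$ is totally ordered) yields some $t_0 \in T$ beyond which $\phi(t)$ dominates $(y_1, \ldots, y_d)$ in the product order, which is exactly the second clause in the definition of subnet.

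Once the subnet structure is in place, equation (\ref{eq:fekete-subsequence}) is exactly the statement that this subnet of $F$ converges to the same limit as $F$ itself, which is the common infimum by Theorem \ref{thm:fekete-positives-d}. For (\ref{eq:fekete-subsequence-inf}) I would combine two bounds: the generic inequality $\inf_{t \in T} g(t) \leq \liminf_{t \to +\infty} g(t)$ supplied by (\ref{eq:liminf-limsup}), together with (\ref{eq:fekete-subsequence}) just proved, gives the ``$\leq$'' direction; the reverse ``$\geq$'' direction holds because each $g(t)$ is literally a value of $F$ at a point of $\positives^d$, hence no smaller than the infimum of $F$ over $\positives^d$. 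Finally, (\ref{eq:fekete-n-d}) is the special case $T = \positiveint$ with $x_i(n) = n$ for every $i$.

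There is essentially no obstacle here: the only substantive step is the routine verification of cofinality for $\phi$, and the key hypothesis that each coordinate function tends to $+\infty$ is precisely what makes $\phi$ cofinal in the product order. If any single $x_i(t)$ failed to diverge, the argument would break down, which is a useful sanity check but presents no difficulty given the stated hypotheses.
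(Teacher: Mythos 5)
Your proposal is correct and follows essentially the same route as the paper: the paper also treats $g(t)$ as a subnet of $\lambdaxt{(x_1,\ldots,x_d)}{f(x_1,\ldots,x_d)/(x_1\cdots x_d)}$ on $\mainorthant^d$ (the subnet verification you spell out is recorded in the paper's Background section), derives (\ref{eq:fekete-subsequence}) from Theorem \ref{thm:fekete-positives-d} via the principle that subnets of convergent nets converge to the same limit, and obtains (\ref{eq:fekete-subsequence-inf}) from the same sandwich $\inf F \leq \inf_t g(t) \leq \liminf_t g(t)$. No gaps.
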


\begin{proof}[Sketch of proof]
  We only remark that (\ref{eq:fekete-subsequence-inf}) follows from
  (\ref{eq:fekete-positives-d}) and:
  \begin{eqnarray*}
    \inf_{x_1, \ldots, x_d > 0} \frac{f(x_1, \ldots, x_d)}{x_1 \cdots x_d}
    & \leq &
    \inf_{t \in T}
    \frac{f(x_1(t), \ldots, x_d(t))}{x_1(t) \cdots x_d(t)}
    \\ & \leq &
    \liminf_{t \to \infty}
    \frac{f(x_1(t), \ldots, x_d(t))}{x_1(t) \cdots x_d(t)} \,.
  \end{eqnarray*}
\end{proof}

Note that, in general, even if $f$ is componentwise subadditive on
$\positives^d$,
\begin{math}
  \lambdaxt{t}{f(t, \ldots, t)}
\end{math}
is not subadditive on $\positives$: a simple example is
\begin{math}
  f(x_1, x_2) = x_1 \cdot x_2
\end{math}.
This provides further evidence that Theorems
\ref{thm:fekete-positives-d} and \ref{thm:fekete-Rd} are not special
cases of \cite[Theorem 6.6.1]{hille1948}.

A real-valued function defined on a semigroup $(S,\cdot)$ is
\emph{superadditive} if it satisfies
\begin{math}
  f(x \cdot y) \geq f(x) + f(y)
\end{math}
for every $x,y\in{S}$. As $f$ is superadditive if and only if $-f$ is
subadditive, an analogue of Theorem \ref{thm:fekete-positives-d} holds
for componentwise superadditive functions, provided one swaps the
roles of $\inf$ and $\sup$ and those of $-\infty$ and $+\infty$. If
$f$ is superadditive in some variables and subadditive in other
variables, however, Theorem \ref{thm:fekete-positives-d} does not
hold.

\begin{example}
  \label{ex:superadd-subadd}
  The function
  \begin{math}
    f : \positives^2 \to \reals
  \end{math}
  defined by
  \begin{math}
    f(x_1, x_2) = x_1^2 \sqrt{x_2}
  \end{math}
  is superadditive in $x_1$ and subadditive in $x_2$, and
  \begin{math}
    f(x_1, x_2) / x_1 x_2 = x_1 / \sqrt{x_2}
  \end{math}.
  But
  \begin{math}
    \lim_{(x_1, x_2) \to \mainorthant^2} f(x_1, x_2) / x_1 x_2
  \end{math}
  does not exist, because for every $y,R>0$ there exist
  \begin{math}
    x_1, x_2 > R
  \end{math}
  such that
  \begin{math}
    x_1 / \sqrt{x_2} = y
  \end{math}.
  Also,
  \begin{math}
    \lim_{x_1 \to \infty} \lim_{x_2 \to \infty} \dfrac{f(x_1, x_2)}{x_1 \cdot x_2}
    = 0
  \end{math}
  but
  \begin{math}
    \lim_{x_2 \to \infty} \lim_{x_1 \to \infty} \dfrac{f(x_1, x_2)}{x_1 \cdot x_2}
    = +\infty
  \end{math}.
\end{example}

As a final remark for this section, the following statement appears in
the literature as an extension to arbitrary dimension of \cite[Theorem
  6.1.1]{hille1948}:
\begin{proposition}[{cf. \cite[Theorem 16.2.9]{kuczma2009}}]
  \label{prop:fekete-d-literature}
  Let
  \begin{math}
    f : \reals^d \to \reals
  \end{math}
  be subadditive \emph{in the variable $\mathbf{x}\in\reals^d$}. Then
  for every $\mathbf{x}\in\reals^d$ the following limit exists:
  \begin{displaymath}
    L_{\mathbf{x}} = \lim_{t \to +\infty} \frac{f(t\mathbf{x})}{t} \,.
  \end{displaymath}
\end{proposition}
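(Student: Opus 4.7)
The plan is to reduce the proposition to the classical one-variable Fekete lemma by restricting $f$ to the ray through the origin in the direction of $\mathbf{x}$. Fix $\mathbf{x} \in \reals^d$ and set $g(t) = f(t\mathbf{x})$ for $t > 0$. The hypothesis that $f$ is subadditive in the single vector variable gives, for every $s, t \in \positives$,
\begin{displaymath}
g(s+t) = f((s+t)\mathbf{x}) = f(s\mathbf{x} + t\mathbf{x}) \leq f(s\mathbf{x}) + f(t\mathbf{x}) = g(s) + g(t),
\end{displaymath}
so $g$ is a subadditive function on the additive semigroup $(\positives, +)$. The classical one-variable Fekete lemma, that is Theorem \ref{thm:fekete-positives-d} taken with $d=1$ (or equivalently \cite[Theorem 6.6.1]{hille1948}), then yields
\begin{displaymath}
L_{\mathbf{x}} = \lim_{t \to +\infty} \frac{g(t)}{t} = \inf_{t > 0} \frac{g(t)}{t} \in \reals \cup \{-\infty\},
\end{displaymath}
which is precisely the asserted limit.

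The only technical wrinkle worth flagging is the measurability of $g$: the restriction of a Lebesgue measurable function on $\reals^d$ to a one-dimensional affine subspace need not itself be Lebesgue measurable, so strictly invoking a form of Fekete's lemma that assumes measurability requires a small amount of care. This is easily handled either by strengthening the blanket measurability hypothesis to Borel measurability (so that restrictions to lines inherit it), or by directly establishing local boundedness of $g$ along the ray and then running the standard Fekete argument, writing $t = ns + r$ with $n \in \positiveint$ and $r \in [0,s)$ and dividing by $t$. I expect this measurability bookkeeping to be the main, and essentially the only, obstacle; the subadditivity step is a one-line calculation.

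Beyond the proof itself, I would remark that this statement, despite appearing multivariate, is really the classical one-dimensional Fekete lemma in disguise: the parametrization $t \mapsto t\mathbf{x}$ collapses all $d$ coordinates into a single real parameter, and $\mathbf{x}$ merely fixes a direction of scaling. In particular, Proposition \ref{prop:fekete-d-literature} does not deliver a simultaneous limit as the coordinates of $(x_1,\ldots,x_d)$ grow independently, which is what Theorem \ref{thm:fekete-positives-d} actually achieves and which cannot be recovered from this one-direction statement.
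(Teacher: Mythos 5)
Your proposal is correct and follows exactly the route the paper itself takes: restrict $f$ to the ray $t \mapsto t\mathbf{x}$, observe that $g_{\mathbf{x}}(t) = f(t\mathbf{x})$ is subadditive on $\positives$, and invoke the classical one-variable Fekete lemma, which is precisely why the paper dismisses Proposition \ref{prop:fekete-d-literature} as a corollary rather than a genuine extension. Your side remark on measurability of the restriction is a reasonable extra precaution that the paper does not bother to address.
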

This, however, is not so much an extension than a \emph{corollary}. If
\begin{math}
  f : \reals^d \to \reals
\end{math}
satisfies
\begin{math}
  f(\mathbf{x} + \mathbf{y}) \leq f(\mathbf{x}) + f(\mathbf{y})
\end{math}
for every
\begin{math}
  \mathbf{x}, \mathbf{y} \in \reals^d
\end{math},
then obviously
\begin{math}
  g_{\mathbf{x}}(t) = f(t\mathbf{x})
\end{math}
satisfies
\begin{math}
  g_{\mathbf{x}}(s + t) \leq g_{\mathbf{x}}(s) + g_{\mathbf{x}}(t)
\end{math}
for every $s,t>0$: and $L_{\mathbf{x}}$ is simply the limit of
$g_{\mathbf{x}}(t)/t$ according to \cite[Theorem 6.1.1]{hille1948}. On
the other hand, Theorem \ref{thm:fekete-Rd} is an extension.

\section{A comparison with the Ornstein-Weiss lemma}
\label{sec:owlemma}

A group $G$ is \emph{amenable} if there exist a directed set
\begin{math}
  \Ucal = (U, \preceq)
\end{math}
and a net
\begin{math}
  \{ F_x \}_{x \in U}
\end{math}
of finite nonempty subsets of $G$ such that:
\begin{equation}
  \label{eq:folner}
  \lim_{x \to \Ucal} \frac{| g F_x \setminus F_x |}{|F_x|} = 0
  \;\; \textrm{for every } g \in G \,.
\end{equation}
A net such as in (\ref{eq:folner}) is called a \emph{(left) F{\o}lner
  net} on the group $G$, from the Danish mathematician Erling
F{\o}lner who introduced them in \cite{folner1955}. Every abelian
group is amenable: for a proof, see \cite[Chapter 4]{csc10}.

\begin{proposition}[Ornstein-Weiss lemma; cf. \cite{ornstein-weiss}]
  \label{prop:ornstein-weiss}
  Let $G$ be an amenable group and let
  \begin{math}
    f : \pf(G) \to \reals
  \end{math}
  be a function which:
  \begin{enumerate}
  \item
    is subadditive with respect to set union, that is,
    \begin{math}
      f(A \cup B) \leq f(A) + f(B)
    \end{math}
    for every
    \begin{math}
      A, B \in \pf(G)
    \end{math};
    and
  \item
    satisfies
    \begin{math}
      f(A) = f(gA)
    \end{math}
    for every $A\in\pf(G)$ and $g\in{G}$.
  \end{enumerate}
  Then for every directed set
  \begin{math}
    \Ucal = (U, \preceq)
  \end{math}
  and every left F{\o}lner net
  \begin{math}
    \Fcal = \{ F_x \}_{x \in U}
  \end{math}
  on $G$,
  \begin{equation}
    \label{eq:ornstein-weiss}
    L = \lim_{x \to \Ucal} \frac{f(F_x)}{|F_x|}
  \end{equation}
  exists, and does not depend on the choice of $\Ucal$ and $\Fcal$.
\end{proposition}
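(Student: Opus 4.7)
The plan is to identify the limit in (\ref{eq:ornstein-weiss}) with
\begin{math}
L^* := \inf_{A \in \pf(G)} f(A)/|A| \in [-\infty, +\infty)
\end{math},
a quantity built from $f$ alone; the stated independence from the choice of $\Ucal$ and $\Fcal$ then follows at no cost. First I would record two preliminaries. Writing any $A \in \pf(G)$ as the union of its $|A|$ singletons, applying subadditivity, and using $f(\{g\}) = f(\{e_G\})$, I obtain $f(A) \leq |A| \cdot f(\{e_G\})$, so in particular $L^* \leq f(\{e_G\}) < +\infty$, and $f$ enjoys a uniform linear upper bound in terms of cardinality. Second, when $L^* > -\infty$ the inequality $L^* \leq f(F_x)/|F_x|$ holds termwise by the definition of infimum, so $\liminf_{x \to \Ucal} f(F_x)/|F_x| \geq L^*$ automatically; the case $L^* = -\infty$ is vacuous.

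The work lies in the matching upper bound $\limsup_{x \to \Ucal} f(F_x)/|F_x| \leq L^*$ (respectively $\limsup = -\infty$ when $L^* = -\infty$). Fix $\varepsilon > 0$ and choose templates $A_1, \ldots, A_n \in \pf(G)$ with $f(A_i)/|A_i| < L^* + \varepsilon$ for each $i$ (or a single $A$ with $f(A)/|A| < M$, when $L^* = -\infty$ and $M$ is prescribed). I would then invoke the Ornstein--Weiss quasi-tiling theorem, which produces an invariance threshold such that every sufficiently left-invariant finite $F \subseteq G$ admits centres $c_{i,j} \in G$ for which the left-translates $c_{i,j} A_i \subseteq F$ are pairwise $\varepsilon$-disjoint and cover at least a $(1-\varepsilon)$-fraction of $|F|$. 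By the F{\o}lner condition (\ref{eq:folner}) this threshold is eventually met by $F_x$, so the quasi-tiling applies. Extracting from it a genuinely pairwise disjoint subfamily $B_{i,j} \subseteq c_{i,j} A_i$ of total cardinality at least $(1 - 2\varepsilon)|F_x|$, applying subadditivity to the true disjoint decomposition $F_x = \bigcup_{i,j} B_{i,j} \cup R$ with $|R| \leq 2\varepsilon |F_x|$, rewriting each $f(c_{i,j} A_i) = f(A_i)$ by left-invariance, and bounding $f(R) \leq |R| \cdot f(\{e_G\})$ using the preliminary bound, I would obtain $f(F_x) \leq (L^* + \varepsilon) |F_x| + C \varepsilon |F_x|$ for a constant $C$ independent of $\varepsilon$; dividing and sending $\varepsilon \to 0$ closes the argument.

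The main obstacle is the quasi-tiling theorem itself, which is the technical heart of the Ornstein--Weiss construction: it is proved by a greedy inductive packing of templates of increasing invariance, and this is precisely where the amenability of $G$ is used. A secondary subtlety is that subadditivity controls $f$ only along genuine disjoint unions, so the passage from the $\varepsilon$-disjoint quasi-tiling to the honest disjoint subfamily $\{B_{i,j}\}$ must be carried out with care; it is in this step, together with the residual estimate $f(R) \leq |R| \cdot f(\{e_G\})$, that the uniform linear upper bound on $f$ established at the outset plays its essential role.
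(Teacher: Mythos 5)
The paper does not prove this proposition at all: it is quoted from the literature, with the remark that a detailed proof is given by Krieger, so there is no internal argument to compare yours against. Judged on its own merits, your sketch fails at its very first step: the limit $L$ in (\ref{eq:ornstein-weiss}) is \emph{not}, in general, equal to $L^* = \inf_{A \in \pf(G)} f(A)/|A|$. Take $G = \integers$, $T = \{0,1,3\}$, and let $f(A)$ be the least number of translates of $T$ whose union contains $A$ (with $f(\emptyset)=0$). This $f$ is subadditive with respect to union and translation invariant, and since every translate of $T$ has three elements while $f(T)=1$, one gets $L^* = 1/3$. But along the F{\o}lner sequence $F_N = \slice{1}{N}$ the ratio $f(F_N)/N$ stays bounded away from $1/3$: every translate $c+T=\{c,c+1,c+3\}$ misses the point $c+2$, which must then be covered by a translate centred at $c+2$, $c+1$ or $c-1$, and each of these meets $c+T$; since any point lies in at most three translates, this forces a number of multiply covered points proportional to the number of translates used, whence $f(F_N) \geq (1/3+\delta)N - O(1)$ for an absolute $\delta>0$ (a direct check gives $f(\slice{1}{17})\geq 7 > 17/3$). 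So $L > L^*$ here, and the independence of $L$ from the net cannot be obtained by pinning $L$ to the global infimum. This is exactly the point the paper itself makes in Section 6: the Ornstein--Weiss limit ``is only ensured to exist, not to coincide with any specific value,'' which is why the lemma does not subsume Fekete's.

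The reason your upper-bound argument cannot be repaired as written is instructive. The Ornstein--Weiss quasi-tiling theorem does \emph{not} let you quasi-tile sufficiently invariant sets by translates of \emph{arbitrary} prescribed templates $A_1,\ldots,A_n$ chosen to nearly realize $L^*$; the tiles must themselves be drawn from a tower of increasingly invariant, F{\o}lner-type sets constructed together with the invariance threshold. In the example above no long interval can be $(1-\varepsilon)$-covered by $\varepsilon$-disjoint translates of $\{0,1,3\}$, precisely because that set does not even approximately tile $\integers$. You may therefore pick templates that nearly achieve the infimum, or templates that quasi-tile, but not both; the actual proof (Gromov's, as written up by Krieger) identifies $L$ only as a limit of infima over increasingly invariant sets. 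A secondary remark: even where the outline is sound, essentially all of the mathematical content is delegated to the quasi-tiling theorem, which is the real substance of the statement; the surrounding bookkeeping (the bound $f(A) \leq |A| \cdot f(\{e_G\})$, the passage from $\varepsilon$-disjoint to genuinely disjoint families, the remainder estimate) is correct but peripheral.
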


The Ornstein-Weiss lemma says that, for ``well behaving'' functions on
amenable groups, a notion of \emph{asymptotic average} is well
defined. A detailed proof of Proposition \ref{prop:ornstein-weiss} is
given by F. Krieger in \cite{krieger2010}.

\begin{example}
  \label{ex:folner-entropy}
  Let $G$ be an amenable group and let $A$ be a finite set with
  $a\geq2$ elements. The \emph{shift} by $g\in{G}$ is the function
  \begin{math}
    \sigma_g : A^G \to A^G
  \end{math}
  defined by
  \begin{math}
    \sigma_g(c)(x) = c(g \cdot x)
  \end{math}
  for every $c\in{A^G}$ and $x\in{G}$. The notions of subshift and of
  allowed pattern with support $S\in\pf(G)$ are extended naturally
  from those of Example \ref{ex:subadd-subshift-logoutput}. Calling
  $\Acal_X(S)$ the number of allowed patterns for $X$ with support
  $S$, and convening that the unique \emph{empty pattern}
  $e:\emptyset\to{A}$ appears in every configuration, we have for
  every $S,T\in\pf(G)$:
  \begin{displaymath}
    \Acal_X(S)
    \leq \Acal_X(S \cup T)
    \leq \Acal_X(S) \cdot \Acal_X(T \setminus S)
    \leq \Acal_X(S) \cdot \Acal_X(T)
    \,.
  \end{displaymath}
  Indeed, every allowed pattern on $S$ (resp., $T\setminus{S}$) can
  be extended to at least one allowed pattern on $S\cup{T}$ (resp.,
  $T$) but joining an allowed pattern over $S$ and an allowed pattern
  over $T\setminus{S}$ does not necessarily yield an allowed pattern
  on $S\cup{T}$. Hence,
  \begin{math}
    f(S) = \log_a \Acal_X(S)
  \end{math}
  is subadditive on $\pf(G)$, and clearly satisfies $f(gS)=f(S)$ for
  every $g\in{G}$ and $S\in\pf(G)$. The \emph{entropy} of $X$ can then
  be defined as:
  \begin{equation}
    \label{eq:entropy-subshift}
    h(X) = \lim_{x \to \Ucal} \frac{\log_a \Acal_X(F_x)}{|F_x|}
  \end{equation}
  where
  \begin{math}
    \Ucal = (U, \preceq)
  \end{math}
  is an arbitrary directed set and
  \begin{math}
    \{ F_x \}_{x \in U}
  \end{math}
  is an arbitrary F{\o}lner net on $G$.
\end{example}

As the sets
\begin{math}
  E_{x_1, \ldots, x_d} = \prod_{i=1}^d \slice{1}{x_i}
\end{math}
with
\begin{math}
  x_1, \ldots, x_d \in \positiveint
\end{math}
constitute a F{\o}lner net on $\integers^d$, defining the entropy of a
$d$-dimensional subshift according to either Example
\ref{ex:fekete-entropy} or Example \ref{ex:folner-entropy} yields the
same result. Nevertheless, the Ornstein-Weiss lemma does not
generalize Fekete's lemma, nor it is possible to prove the latter from
the former, as the limit (\ref{eq:ornstein-weiss}) is only ensured to
exist, not to coincide with any specific value. In addition, even if
\begin{math}
  f : \integers \to \reals
\end{math}
is subadditive, the ``natural'' conversion
\begin{equation}
  \label{eq:subadd-ZtoPF}
  g(A) = \ifthenelse{A \neq \emptyset}{f(|A|)}{0}{,}
\end{equation}
where $|A|$ is the number of elements of $A$, is invariant by
translations, but needs not be subadditive on $\pf(\integers)$, the
main reason being that $|A\cup{B}|$ needs not equal
$|A|+|B|$. Moreover, while invariance by translations is essential in
the Ornstein-Weiss lemma, a translate of a subadditive function needs
not be subadditive.

\begin{example}
  \label{ex:ZtoPF-not}
  The function
  \begin{math}
    f(n) = n \!\! \mod 2
  \end{math}
  is easily seen to be subadditive on $\integers$. But the function
  $g$ defined from $f$ by (\ref{eq:subadd-ZtoPF}) is not subadditive
  on $\pf(\integers)$, because if
  \begin{math}
    U = \{ 1, 2 \}
  \end{math}
  and
  \begin{math}
    V = \{ 2, 3 \}
  \end{math},
  then $g(U\cup{V})=1$ and $g(U)=g(V)=0$. Note that $h(n)=f(n+1)$ is
  not subadditive, because $h(1)=0$ but $h(2)=1$.
\end{example}

\section{Conclusions}
\label{sec:conclusions}

We have discussed an extension of the notion of subadditivity in the
case of many independent variables. In this context, we have proved a
nontrivial extension of the classical Fekete's lemma to the case of
functions of $d\geq1$ real variables, which recovers the original
statement for $d=1$, and which is more general than other extensions
already present in the literature. While doing so, we have also proved
that these componentwise subadditive functions satisfy the important
property of being bounded on compact subsets, the case $d=1$ being
already known from the literature.

We believe that our results can be of interest for researchers in
economics, theory of dynamical systems, and mathematical analysis.

\bibliographystyle{plain}
\bibliography{biblio}

\end{document}